\begin{document}

\theoremstyle{plain}

\newtheorem{thm}{Theorem}[section]
\newtheorem{lem}[thm]{Lemma}
\newtheorem{pro}[thm]{Proposition}
\newtheorem{cor}[thm]{Corollary}
\newtheorem{que}[thm]{Question}
\newtheorem{rem}[thm]{Remark}
\newtheorem{defi}[thm]{Definition}

\newtheorem*{thmA}{Theorem A}
\newtheorem*{corB}{Corollary B}

\newcommand{\PSL}{\operatorname{PSL}}
\newcommand{\PSU}{\operatorname{PSU}}
\newcommand{\AAA}{\sf A}
\newcommand{\Aut}{\operatorname{Aut}}

\def\Z{{\mathbb Z}}
\def\C{{\mathbb C}}
\def\Q{{\mathbb Q}}
\def\F{{\mathbb F}}
\def\irr#1{{\rm Irr}(#1)}
\def\irrv#1{{\rm Irr}_{\rm rv}(#1)}
\def \c#1{{\cal #1}}
\def\cent#1#2{{\bf C}_{#1}(#2)}
\def\syl#1#2{{\rm Syl}_#1(#2)}
\def\nor{\triangleleft\,}
\def\oh#1#2{{\bf O}_{#1}(#2)}
\def\Oh#1#2{{\bf O}^{#1}(#2)}
\def\zent#1{{\bf Z}(#1)}
\def\zentt#1{{\bf Z}_2(#1)}
\def\det#1{{\rm det}(#1)}
\def\ker#1{{\rm ker}(#1)}
\def\norm#1#2{{\bf N}_{#1}(#2)}
\def\alt#1{{\rm Alt}(#1)}
\def\iitem#1{\goodbreak\par\noindent{\bf #1}}
\def \mod#1{\, {\rm mod} \, #1 \, }
\def\sbs{\subseteq}
\def\exp#1{{\rm exp}(#1)}

\def\gc{{\bf GC}}
\def\nlgc{{\bf GC^{*}}}

\def\o#1{\overline{#1}}

\newcommand{\gen}[1]{\left < #1 \right >}
\newcommand{\Gal}{{\it Gal}}

\def\skipa{\vspace{-1.5mm} & \vspace{-1.5mm} & \vspace{-1.5mm}\\}
\newcommand{\tw}[1]{{}^#1\!}
\renewcommand{\mod}{\bmod \,}
\renewcommand{\phi}{\varphi}

\marginparsep-0.5cm

\renewcommand{\thefootnote}{\fnsymbol{footnote}}
\footnotesep6.5pt

\title[Characters of the same degree are Galois conjugate]{Finite groups whose non-linear irreducible characters of the same degree are Galois conjugate}
\author{Silvio Dolfi}
\address{Dipartimento di Matematica, Universit\`a di Firenze, 50134 Firenze, Italy}
\email{dolfi@math.unifi.it}
\author{Manoj K. Yadav}
\address{School of Mathematics, Harish-Chandra Research Institute, Chhatnag Road, Jhunsi, Allahabad - 211019, India}
\email{myadav@hri.res.in}

\begin{abstract}
We classify the finite groups whose non-linear irreducible characters that are not conjugate under the natural Galois action have distinct degrees, therefore extending the results in Berkovich et al. [Proc. Amer. Math. Soc. {\bf 115} (1992), 955-959] and Dolfi et al. [Israel J. Math. {\bf 198} (2013), 283-331].
\end{abstract}

\thanks{The first author thanks the Harish-Chandra Research Institute of Allahabad for 
hospitality in February 2014 and April 2015; he has been partially supported by the Italian INdAM-GNSAGA}

\maketitle

\section{Introduction}
In  1992,  Berkovich,  Chillag and  Herzog \cite{BCH}  classified the 
finite groups whose non-linear irreducible characters all have distinct degrees. 
Since Galois groups of suitable cyclotomic fields act in a natural degree-preserving way  (see below) on the set 
$\irr G$ of the irreducible characters of a finite group $G$, it seems natural to weaken the above mentioned condition
by  asking that there exists just one orbit on $\irr G$   for every given irreducible 
character degree $\neq 1$. 
While the condition in~\cite{BCH} forces all non-linear characters in $\irr G$ to be rational 
valued, we are now just imposing a minimality condition on the multiplicities of the 
degrees of the irreducible characters, without setting restrictions on their fields of values. 

Let $G$ be a finite group, $n$ a multiple of $|G|$ and let $\mathcal{G}_n={\rm Gal}(\Q_n|\Q)$
be the Galois group of the $n$-th cyclotomic extension.
Then $\mathcal G_n$ acts on the set $\irr G$ 
as follows: for $\alpha \in \mathcal{G}_n$, $\chi \in \irr G$ and $g \in G$, we define
$$\chi^{\alpha}(g) = \chi(g)^{\alpha} \; . $$

For $\chi, \psi \in \irr G$, if there exists a Galois automorphism 
$\alpha \in \mathcal{G}_n$ such that $\chi^\alpha = \psi$, then  we say that
$\chi$ and $\psi$ are \emph{Galois conjugate} (in $\mathcal{G}_n$).
This is clearly an equivalence relation on $\irr G$. Characters in the same equivalence class 
have the same kernel, center, field of values and degree.

In this paper, we weaken the condition of \cite{BCH}, and prove the following result.

\begin{thmA}
  \label{structure}
Let $G$ be a finite group. Every two non-linear irreducible characters of the same degree of $G$ are Galois conjugate  if and only if $G$ is either abelian or one 
of the following. 
\begin{description}
\item[(a)] $G$ is a $p$-group ($p$ a prime), $|G'| = p$ and $\zent G$ is cyclic;  
\item[(b)] $G$ is a Frobenius group with prime power order kernel $K$ and
complement $L$, with $L$ cyclic or $L \cong Q_8$.  
Moreover: 
\begin{description}
\item[(b1)] $L \cong Q_8$ and $|K| = 3^2$; or
\item[(b2)] $K$ is elementary abelian,   $|K| = q^n$ ( $q$  prime), 
$L$ is cyclic and $|L| = (q^n -1)/d$, where $d$ divides $q-1$ and $(d,n) = 1$; or
\item[(b3)] $K$ is a Suzuki $2$-group with $|K| = |K'|^2$ and $L$ is cyclic of 
order  $|K'| -1$. 
\end{description}

\item[(c)] $G$ is non-solvable and either 
$$G \in \{ \AAA_5,Sz(8),J_2,J_3,L_3(2),M_{22},Ru,Th,{}^3D_4(2)\}$$
or 
$$G \in  \{\AAA_5 \times Sz(8),\AAA_5 \times Th, L_3(2) \times Sz(8)\} \; .$$ 
\end{description}
\end{thmA}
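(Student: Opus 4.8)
\emph{Strategy.} Call a finite group $G$ a $\gc$-group if any two non-linear irreducible characters of $G$ of the same degree are Galois conjugate; as noted in the introduction this does not depend on $n$. The first elementary fact I would record is that being a $\gc$-group is inherited by quotients: if $N \nor G$ and $\chi, \psi \in \irr{G/N} \sbs \irr G$ are non-linear of equal degree, they are Galois conjugate in $G$, and since Galois conjugation preserves kernels a conjugating automorphism carries $\chi$ to a character of $G/N$. I would then prove the two implications separately: the converse by direct verification for each family, and the forward implication by induction on $|G|$, split into the solvable and the non-solvable cases.

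\emph{Sufficiency.} For (a), $|G'| = p$ forces $G' \sbs \zent G$, so $G$ has class $2$ and $G'$ is the unique subgroup of order $p$ of the cyclic group $\zent G$; any non-linear $\chi$ then has $\ker\chi = 1$, whence $\chi_{\zent G} = \chi(1)\mu_\chi$ with $\mu_\chi$ faithful and $\chi(1)^2 = [G:\zent G]$. Thus all non-linear characters have the same degree, and $\chi \mapsto \mu_\chi$ is a Galois-equivariant bijection onto the faithful linear characters of $\zent G$, which form a single Galois orbit because $\zent G$ is cyclic. For (b), $G = K \rtimes L$ is Frobenius, so each non-linear $\chi$ is either inflated from a non-linear character of $L$ (possible only if $L \cong Q_8$, contributing one character of degree $2$) or of the shape $\theta^G$ with $1 \ne \theta \in \irr K$, of degree $|L|\theta(1)$, $\theta$ ranging over the $L$-orbits on $\irr K \setminus \{1\}$. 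In (b1) the only non-linear characters are one of degree $8$ and one of degree $2$, so $\gc$ is immediate. In (b2) the $L$-orbits on $\irr K \setminus \{1\}$ are the $d$ cosets of $L$ in $\F_{q^n}^\times$, giving $d$ characters all of degree $(q^n - 1)/d$, and $\mathcal G_n$ acts on them through the image of $\F_q^\times$ in $\F_{q^n}^\times / L \cong \Z/d$; a short computation using $d \mid q - 1$ shows this image is all of $\Z/d$ precisely when $(d,n) = 1$, so $\gc$ holds exactly under the stated hypothesis. In (b3) the known structure of Suzuki $2$-groups — $L$ transitive on $\irr{K/K'} \setminus \{1\}$ and on $\irr{\zent K} \setminus \{1\}$, and $\exp K = 4$ — shows the induced characters of each degree form a single Galois orbit. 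For (c) one reads $\gc$ off from the character tables of the nine simple groups, and for the three products $G_1 \times G_2$ one checks that the non-linear degree multisets of $G_1$ and $G_2$ combine without a collision that would split an orbit, after which Galois conjugacy of $\chi_1 \times \chi_2$ and $\psi_1 \times \psi_2$ in the product reduces, since $\mathcal G_n$ acts coordinatewise, to Galois conjugacy of $\chi_i$ with $\psi_i$ in the factors.

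\emph{Necessity, solvable case.} Let $G$ be solvable, non-abelian, a $\gc$-group; I argue by induction on $|G|$, using repeatedly that proper quotients are again $\gc$-groups (and, where they apply, the classifications of Berkovich--Chillag--Herzog and Dolfi et al.). Analysing the action of $G$ on a minimal normal subgroup $N$ (an elementary abelian $p$-group) by Clifford theory, one expresses the degrees of the characters over a non-trivial $\lambda \in \irr N$ through $[G : I_G(\lambda)]$ and the relevant stabiliser; the $\gc$-constraint — equal degree forces equal field of values, hence equal kernel — sharply limits the $G$-orbits on $\irr N$ and drives $G$ to be either a $p$-group or a Frobenius group whose kernel $K$ has prime-power order and whose complement $L$ is cyclic or $\cong Q_8$. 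If $G$ is a $p$-group, reducing through quotients to $|G'| = p^2$ and then exhibiting two non-linear characters of equal degree with distinct kernels (using the several subgroups of order $p$ inside $G' \cap \zent G$, or a count bounding a Galois orbit by $\phi(\exp G)$) gives a contradiction, so $|G'| = p$; and then the bijection of the sufficiency argument forces $\zent G$ cyclic, since for a non-cyclic $\zent G$ the non-linear characters — all of one degree — would split into more than one Galois orbit. If $G$ is Frobenius, the description of the $\theta^G$ together with the requirement that the characters induced from $\irr K$ occupy few Galois orbits pins down $K$ and $L$ and produces exactly (b1)--(b3); the condition $(d,n) = 1$ re-emerges as the demand that the $d$ characters of common degree in (b2) be a single orbit.

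\emph{Necessity, non-solvable case; the main obstacle.} Let $G$ be non-solvable, a $\gc$-group. From quotient-closure, the fact that a direct product $S \times A$ with $S$ non-abelian simple and $A \ne 1$ abelian is never a $\gc$-group (for a non-linear $\chi \in \irr S$ the characters $\chi \times 1_A$ and $\chi \times \psi$ with $1_A \ne \psi$ have equal degree but are not Galois conjugate), and the fact that proper almost-simple extensions typically carry two distinct rational characters of equal degree (as $\AAA_n$ already does for $n = 6$, and $S_n$ for $n \ge 5$), one reduces to the case that $G$ is a direct product of non-abelian simple groups and then to deciding which such products are $\gc$-groups. The main obstacle — and the bulk of the work — is the simple-group classification, necessarily via CFSG: one must show that the only non-abelian simple $\gc$-groups are the nine listed. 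The alternating case is a computation with self-conjugate partitions and hook lengths; the sporadic case is an $\mathrm{ATLAS}$ check; the genuinely hard part is the groups of Lie type, where one uses the generic description of the irreducible character degrees (Deligne--Lusztig theory, the unipotent character degrees, L\"ubeck's data) to exhibit, in all remaining cases, two irreducible characters of the same degree with different fields of values — typically two rational unipotent characters of equal degree, or semisimple characters attached to semisimple classes not fused by the relevant Galois action. With the nine simple $\gc$-groups in hand one finally checks the finitely many products: $\gc$ for a product forces the non-linear degree multisets of the factors to be essentially degree-disjoint (a factor may repeat a degree, but two distinct factors may not share a non-linear degree, nor may two products of factor-degrees coincide in a way mixing Galois orbits), leaving precisely the three products in (c).
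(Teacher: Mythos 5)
Your skeleton matches the paper's (quotient-closure, solvable versus non-solvable, $p$-groups, Frobenius groups, then CFSG input), but at the points where the actual work lies the proposal asserts conclusions rather than proving them, and in two places the proposed shortcut would not work. First, in the solvable non-nilpotent case you say Clifford theory ``drives $G$ to be Frobenius'' over the kernel: this is precisely the hard step, and the paper's mechanism is quite specific — for $\phi\in\irr K$, $K=G_\infty$, one uses coprimality of $o(\phi)\phi(1)$ with $|G/K|$ to get a canonical extension to the inertia group, then Galois conjugacy of the induced characters plus uniqueness of the canonical extension and Gallagher force $I_G(\phi)=K$, and Brauer's permutation lemma converts this into the Frobenius property; nothing in your sketch supplies this. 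Second, in the $p$-group case your proposed contradiction ``using the several subgroups of order $p$ inside $G'\cap\zent G$'' fails exactly in the hardest configuration, where $\zent P$ has order $p$ and is the \emph{unique} normal subgroup of order $p$, with $P'\cong C_p\times C_p$ not central; the paper must there pass to $N=\cent P{P'}$ of index $p$, analyse fully ramified characters over $\zent N/T$, bound a Galois orbit of extensions by $|{\rm Gal}(\Q_{p^2}|\Q)|=p(p-1)$, and compare degrees with the extraspecial quotient $P/Z$ — none of which is replaced by your one-line count. Third, the Frobenius case with non-abelian kernel is essentially absent on both sides: for necessity the paper needs Shult's theorem, \cite[Lemma 12.6]{MW} and orthogonality relations to force $K$ to be a Suzuki $2$-group with $|K|=|K'|^2$ and $q=2$; for sufficiency of (b3), $L$-transitivity on $\irr{K/K'}^{\#}$ and $\irr{\zent K}^{\#}$ is \emph{not} enough, since the $2|L|$ non-linear characters of $K$ fall into two $L$-orbits and the two induced characters of the large degree are distinct; the paper's key point is that complex conjugation must swap the two orbits (they have odd size and contain no real character, as elements of $K\setminus K'$ have order $4$), which is the idea your sketch is missing.

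On the non-solvable side you diverge from the paper in a costly direction. The paper proves only that a non-solvable $\nlgc$-group is perfect — via Lemma~\ref{DNT3.2}, the existence of an ${\rm Aut}(S)$-invariant rational character of odd degree, the Navarro--Tiep extension theorem and Gallagher — and then quotes Theorem~A of \cite{DNT} for the classification of perfect $\gc$-groups. You instead propose to redo the classification of simple $\gc$-groups (alternating, sporadic, and Lie type via Deligne--Lusztig/L\"ubeck data), which is the bulk of \cite{DNT}, and your reduction ``to a direct product of non-abelian simple groups'' is itself unjustified: ruling out quasisimple central extensions, groups with non-trivial solvable radical, and minimal normal subgroups $S^n$ with $n>1$ permuted non-trivially requires arguments (in the paper, the perfectness theorem together with the almost-simple statement of Theorem~\ref{DNT4.1}) that do not follow from quotient-closure plus the $S\times A$ observation. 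So while the converse direction and the broad strategy are sound, the proposal has genuine gaps at the Frobenius reduction, the critical $p$-group configuration, the Suzuki-kernel case in both directions, and the reduction underpinning case (c).
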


As a consequence of Theorem~A, we get a new proof of the main result of~\cite{BCH}. 

\begin{corB}
Let $G$ be a finite group.
Then, for every non-linear $\chi, \psi \in \irr G$, $\chi \neq \psi$ implies
$\chi(1) \neq \psi(1)$ if and only if $G$ is either abelian or one of the following groups: 
\begin{description}
\item[(a)] extraspecial $2$-groups; 
\item[(b)] $G = KL$ is a Frobenius group with elementary abelian kernel $K$, $|K| = q^n$
for a prime $q$, and either 
\begin{description}
\item[(b1)] $L \cong Q_8$ and $q^n = 3^2$; or
\item[(b2)] $L$ is cyclic of order $q^n -1$. 
\end{description}
\end{description}  
\end{corB}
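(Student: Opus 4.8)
The plan is to deduce Corollary~B from Theorem~A. Since the Corollary~B hypothesis --- that distinct non-linear irreducible characters have distinct degrees --- trivially implies the Theorem~A hypothesis, we know at once that $G$ is abelian or lies in one of the families (a), (b), (c), and it remains to decide, inside each family, which groups also satisfy the Corollary~B hypothesis. The key preliminary observation is that, \emph{granting} the Theorem~A hypothesis, the Corollary~B hypothesis is equivalent to the requirement that every non-linear $\chi\in\irr G$ be rational-valued: indeed, if $\chi^{\alpha}\neq\chi$ for some $\alpha\in\mathcal{G}_n$ then $\chi$ and $\chi^{\alpha}$ are distinct characters of equal degree, while conversely, if all non-linear characters are rational and $\chi\neq\psi$ are non-linear with $\chi(1)=\psi(1)$, then Theorem~A forces $\psi=\chi^{\alpha}=\chi$, a contradiction. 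So the task becomes: retain, in the list of Theorem~A, exactly the groups all of whose non-linear irreducible characters are rational. Abelian groups qualify vacuously; for the non-solvable groups of family~(c) one reads off from the known character tables that each of $\AAA_5, Sz(8), J_2, \ldots$ carries an irrational non-linear character (for instance the two degree-$3$ characters of $\AAA_5$ have values in $\Q(\sqrt{5})$), and each of the three direct products inherits one from a factor; so family~(c) contributes nothing.

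For the $p$-groups of family~(a) I would show the survivors are exactly the extraspecial $2$-groups. Because $|G'|=p$ we have $G'\le\zent G$, so $G$ has nilpotency class at most $2$. Given a non-linear $\chi\in\irr G$, the quotient $\bar G=G/\ker\chi$ again has $|\bar G'|=p$ and $\chi$ is faithful on it, so $\zent{\bar G}$ is cyclic and $\chi$ restricts to $\zent{\bar G}$ as $\chi(1)$ times a faithful linear character; rationality of $\chi$ then forces $|\zent{\bar G}|\le 2$, hence $p=2$. If $|\zent G|\ge 4$, a faithful linear character $\mu$ of the cyclic group $\zent G$ would lie under some non-linear $\chi\in\irr G$ (non-linear because $\mu$ is non-trivial on $G'$), and for such $\chi$ the subgroup $\zent G$ embeds into $\zent{G/\ker\chi}$, contradicting the previous step; so $\zent G=G'$ has order $2$. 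Finally, since $|G'|=2$ every element of $G$ squares into the centre (as $[x^{2},g]=[x,g]^{2}=1$), whence $\zent G=G'=\Phi(G)$ has order $2$, i.e.\ $G$ is extraspecial. Conversely an extraspecial $2$-group of order $2^{1+2m}$ has a unique non-linear irreducible character, which is rational of degree $2^{m}$, so it satisfies the condition; this recovers family~(a) of Corollary~B.

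For the Frobenius groups $G=KL$ of family~(b) I would argue by Clifford theory relative to $K$: as $KL$ is Frobenius, $L$ acts freely on $\irr K\setminus\{1\}$, so the non-linear irreducible characters of $G$ are precisely the induced characters $\theta^{G}$ with $1\neq\theta\in\irr K$, of degree $|L|\,\theta(1)$, and $\theta^{G}=\vartheta^{G}$ exactly when $\theta$ and $\vartheta$ are $L$-conjugate. In case~(b3), $K$ is a non-abelian Suzuki $2$-group with more than $|L|=|K'|-1$ non-linear irreducible characters, all of a single degree; these split into at least two $L$-orbits and hence give at least two $G$-characters of a common degree, so (b3) is excluded. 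In case~(b1) (where $|G|=72$) one checks directly that $G$ has exactly two non-linear irreducible characters, of degrees $2$ and $8$, both rational. In case~(b2) the $L$-orbits on $\irr K\setminus\{1\}$ number exactly $d$ and all have size $|L|$, so $G$ has $d$ non-linear characters, all of degree $|L|$; the condition therefore holds if and only if $d=1$, in which case $G$ has a single non-linear irreducible character, of degree $q^{n}-1$ and manifestly rational. This yields family~(b) of Corollary~B and completes the proof.

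The only genuine (though routine) computation is in family~(b3): determining the number and degrees of the non-linear irreducible characters of the Suzuki $2$-groups that occur, for which one uses their structure $\zent K=K'=\Phi(K)$ together with $|K|=|K'|^{2}$. The character-table checks for family~(c) are immediate given the classification, and family~(a) is entirely elementary once one exploits $G'\le\zent G$ and the identity $[x^{2},g]=[x,g]^{2}$; the conceptual content is really just the reduction of the problem, via Theorem~A, to listing the groups whose non-linear irreducible characters are all rational.
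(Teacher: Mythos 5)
Your proposal is correct and takes essentially the same route as the paper: both deduce Corollary~B by running through the list of Theorem~A family by family, with your ``all non-linear characters rational'' criterion being equivalent (under the Theorem~A hypothesis) to the paper's method of counting the non-linear characters of each degree, and with the same counts in cases (b1)--(b3) and the same Atlas appeal in case (c). The only noticeable variation is family (a), where you argue structurally that rationality of a faithful character forces $|\zent{G/\ker{\chi}}|\le 2$ and hence $G$ extraspecial of exponent-$2$ Frattini quotient, whereas the paper just reads off from Lemma~\ref{fullyramified} that a type-(a) group has exactly $(p-1)|\zent G:G'|$ non-linear characters, all of one degree; both are sound.
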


In~\cite{DNT} the finite groups such that all $\emph{non-principal}$ irreducible characters
of the same degree are Galois conjugate are classified. 
We remark that for  non-solvable groups, the class of groups studied in~\cite{DNT} and the class we are considering here in fact coincide by Theorem~\ref{perfect}. 
However, the two classes
differ significantly in the case of nilpotent groups: while the nilpotent groups in~\cite{DNT} are just groups of prime order~\cite{DNT}, or the trivial group,
here  we have $p$-groups with cyclic center and
commutator subgroup of prime order, or abelian groups (see Corollary~\ref{nilpotent}).  

Finally, we remark that by quoting Theorem 4.1 of~\cite{DNT} (see Theorem~\ref{DNT4.1}) our work depends on the Classification of Finite Simple Groups.

\section{Preliminaries}
In the following, by ``group'' we always mean ``finite group''. 
We  use standard notation in character theory, as in ~\cite{Is}. 
Given a character $\chi \in \irr G$, we define
$\Q(\chi) = \Q[\{ \chi(g) \mid g \in G\}]$, the field generated by the values of $\chi$; 
$\Q(\chi)$ is called the \emph{field of values} of $\chi$. 
We stress here that two characters in $\irr G$ are Galois conjugate in some 
Galois group $\mathcal{G}_n$ if and only if they are Galois conjugate in 
${\rm Gal}(\Q(\chi)|\Q)$ (see Lemma~\ref{lem1}(a)). 
Therefore, we omit the explicit reference to a specific Galois extension
of $\Q$, and we  simply say ``Galois conjugate''.

\begin{defi}
We say that a finite group $G$ is a $\nlgc$-group 
(or $G \in \nlgc$)
if any  two \emph{non-linear} irreducible characters of $G$ are Galois conjugate whenever
they have the same degree.  
\end{defi}

The following lemma collects some basic facts, often used without explicit reference. 
In particular, part (d) shows that the class $\nlgc$ is stable by taking factor groups. 
 
\begin{lem}
\label{lem1}
Let $G$ be a finite group. Then the following hold true.
\begin{description}
\item[(a)] Let $\chi, \psi  \in \irr G$, 
and let $E = \Q_n$be  any cyclotomic field such that $\Q(\chi) \subseteq  E$. 
Then $\chi$ and $\psi$ are Galois conjugate in $E$ if and only if they are 
Galois conjugate in $\Q(\chi)$. 

\item[(b)] If $\chi, \psi \in \irr G$  are Galois conjugate, then $\chi(1) = \psi(1)$, 
$\Q(\chi) = \Q(\psi)$,  $\ker{\chi} = \ker{\psi}$ and $\zent{\chi} = \zent{\psi}$.

\item[(c)] Let $G = A \times B$, with $A$ non-abelian. 
If $G \in \nlgc$, then $B = B'$.

\item[(d)] Let $N$ be a normal subgroup of $G$. If $G \in \nlgc$, then 
$G/N \in \nlgc$.

\end{description}
\end{lem}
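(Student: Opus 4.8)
The plan is to treat the four parts separately, as they are essentially independent and of quite different character. For part (a), the natural approach is to use the fundamental theorem of Galois theory together with the transitivity of the restriction map on Galois groups of cyclotomic fields: every $\alpha \in \Gal(E|\Q)$ restricts to an element of $\Gal(\Q(\chi)|\Q)$ since $\Q(\chi)/\Q$ is itself Galois (being a subextension of the abelian extension $E/\Q$); conversely, every automorphism of $\Q(\chi)$ extends to one of $E$. If $\chi^\alpha = \psi$ for some $\alpha \in \Gal(E|\Q)$, then since $\psi$ also takes values in $\Q(\chi)$ (as $\chi$ and $\psi$ have the same field of values once they are conjugate — but we cannot assume that yet), we instead argue directly: $\chi^\alpha = \psi$ means $\psi(g) = \chi(g)^\alpha$ for all $g$, and since each $\chi(g) \in \Q(\chi)$, the value $\chi(g)^\alpha$ depends only on $\alpha|_{\Q(\chi)}$; hence $\chi^{\alpha|_{\Q(\chi)}} = \psi$ as well. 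The converse is the easier extension direction.

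For part (b): equality of degrees is clear since $\chi^\alpha(1) = \chi(1)^\alpha = \chi(1)$ as $\chi(1) \in \Z$ is fixed by every Galois automorphism. For the field of values, if $\psi = \chi^\alpha$, then $\Q(\psi) = \alpha(\Q(\chi))$; but $\Q(\chi)$ is Galois over $\Q$, hence $\alpha$-stable, so $\Q(\psi) = \Q(\chi)$. For the kernels: $g \in \ker\chi$ iff $\chi(g) = \chi(1)$, and applying $\alpha$ this holds iff $\psi(g) = \psi(1)$ (using that $\chi(1)^\alpha = \chi(1)$), so $\ker\chi = \ker\psi$. For the centers, the same argument applies using the characterization $g \in \zent\chi$ iff $|\chi(g)| = \chi(1)$, together with the fact that $|\chi(g)^\alpha| = |\chi(g)|$ because $\alpha$ permutes the algebraic conjugates of the root of unity $\chi(g)/\chi(1)$ appearing when one diagonalizes the representation — concretely, $\chi(g)$ is a sum of $\chi(1)$ roots of unity, and $|\chi(g)| = \chi(1)$ forces them all equal, a condition preserved by $\alpha$.

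For part (c): write $G = A \times B$ with $A$ non-abelian, and suppose $B$ is not perfect, so there is a non-trivial linear character $1_B \neq \lambda \in \irr B$. Pick any non-linear $\chi \in \irr A$; then $\chi \times 1_B$ and $\chi \times \lambda$ are both non-linear irreducible characters of $G$ of the same degree $\chi(1)$. If they were Galois conjugate, part (b) would give them the same kernel; but $B \leq \ker(\chi \times 1_B)$ while $B \not\leq \ker(\chi \times \lambda)$, a contradiction. Hence $B = B'$. For part (d): given $G \in \nlgc$ and $N \nor G$, identify $\irr{G/N}$ with the characters of $G$ containing $N$ in their kernel; if $\bar\chi, \bar\psi \in \irr{G/N}$ are non-linear of the same degree, the corresponding $\chi, \psi \in \irr G$ are non-linear of the same degree, hence Galois conjugate in $G$, and the defining relation $\chi^\alpha = \psi$ evidently descends to $\bar\chi^\alpha = \bar\psi$ since the values are unchanged.

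I do not expect any genuine obstacle here: the whole lemma is a routine assembly of standard facts from Galois theory of cyclotomic fields (restriction/extension of automorphisms, $\Q(\chi)/\Q$ abelian) and the elementary character theory of Isaacs (the characterizations of $\ker\chi$ and $\zent\chi$ via character values, and the character theory of direct products). The only point requiring a moment's care is the center statement in (b), where one must invoke that a sum of $n$ roots of unity has absolute value $n$ only when all summands coincide, a property stable under the Galois action.
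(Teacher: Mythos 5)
Your proposal is correct and follows essentially the same route as the paper: part (a) via restriction/extension of automorphisms using that $\Q(\chi)|\Q$ is normal inside the abelian extension $E|\Q$, part (c) via the pair $\chi\times 1_B$, $\chi\times\lambda$ having equal degrees but different kernels, and part (d) via inflation. The only difference is that you spell out part (b) (which the paper dismisses as immediate from the definitions), and your argument there — including the triangle-inequality characterization of $\zent\chi$ and its stability under the Galois action — is sound.
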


\begin{proof}

(a) Since ${\rm Gal}(E|\Q)$ is abelian,  $\Q(\chi)|\Q$ is a normal extension and the claim follows
by extending (resp. restricting)  $\Q$-automorphisms  to $E$ (resp. to $\Q(\chi)$). 

(b) These assertions follow directly from the definitions. 

(c) Let $\alpha \in \irr A$ with $\alpha(1) > 1$  and let $\beta\in \irr B$ with $\beta(1) = 1$. 
Then $\chi = \alpha \times 1_B$ and 
$\psi = \alpha \times \beta$ are  non-linear irreducible characters of $G$ and
they have the same degree. It follows that $B \leq \ker{\chi} = \ker{\psi}$, so
$\beta = 1_B$. Hence, $B' = B$.  

(d) Let $\chi, \psi \in \irr{G/N}$ be non-linear characters of the same degree. 
Then the same is true for their inflations $\chi_0, \psi_0 \in \irr G$; so they 
are Galois conjugate and (observing that $\Q(\chi) = \Q(\chi_0)$) the claim follows.
\end{proof}

Let $N$ be a normal subgroup of $G$ and let  $\lambda \in \irr N$. 
We denote by $\irr{G|\lambda} = \{ \chi \in \irr G \:|\: [\chi_N, \lambda] \neq 0 \}$
the set of the irreducible characters of $G$ lying above $\lambda$. 
If $\lambda$ is invariant in $G$ and $|\irr{G|\lambda}| = 1$ we say that
$\lambda$ is \emph{fully ramified} in $G/N$. In this case, if 
$\chi \in \irr G$ is the (only) character lying above $\lambda$, then $\chi(g) = 0$
for all $g \in G \setminus N$ and $|G/N| = (\chi(1)/\lambda(1))^2$ (see~\cite[Problem 6.3]{Is}).

\begin{lem}
  \label{fullyramified}
Let $P$ be a $p$-group such that $|P'| = p$, where $p$ is a prime. Let $Z = \zent P$.
Then the following statements hold true.
\begin{description}
\item[(a)] Every non-linear irreducible character of $P$ is a faithful character of  degree
$\sqrt{|P:Z|}$; 
\item[(b)] Every  non-trivial character 
$\lambda \in \Lambda := \{\lambda \in \irr Z \mid P'\not\leq \ker{\lambda}\}$
is fully ramified in $P/Z$. 
The map 
$$f: \Lambda \rightarrow  \{\chi \in \irr P \mid\chi(1) > 1 \} $$ 
such that  $f(\lambda) =  \chi_{\lambda}$,  where $\chi_{\lambda}$ is the unique irreducible character
of $G$ lying over $\lambda$,  is a bijection.  
\end{description}
 \end{lem}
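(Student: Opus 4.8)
The plan is to exploit the extreme constraint $|P'| = p$, which forces $P' \le \zent{P} = Z$ and makes $P/Z$ elementary abelian with a nondegenerate symplectic form. First I would establish (a). Let $\chi \in \irr P$ be non-linear. Since $P' \le \zent{P}$, the group $P$ is of class $2$, so $Z/\ker{\chi} \cap Z$ is cyclic and $\chi$ restricted to $Z$ is a multiple of a single linear character $\lambda$ of $Z$; because $\chi$ is non-linear, $P' \not\le \ker{\lambda}$, hence $\ker{\chi} \cap P' = 1$, i.e.\ $\ker{\chi} \le Z$ and $\ker{\chi}$ is a $p'$-free subgroup of $Z$ meeting $P'$ trivially. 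In fact I claim $\ker{\chi} = 1$: since $|P'| = p$, any nontrivial subgroup of $Z$ either contains $P'$ (if it is not contained in a maximal subgroup avoiding $P'$)—here I would argue that $\zentt{P}$-type considerations or directly the fact that $C := \cent{P}{\ker \chi} = P$ combined with $[P,P] = P' \not\le \ker\chi$ forces $\ker{\chi}=1$. A cleaner route: $\chi$ vanishes off $Z$ (shown below via fully ramified), so $\sum_{g} |\chi(g)|^2 = |P|$ gives $\chi(1)^2 |Z| = |P|$ once we know $|\chi(g)| = \chi(1)$ on $Z$ (true as $\chi_Z = \chi(1)\lambda$), whence $\chi(1) = \sqrt{|P:Z|}$, and then $|\ker\chi| \le |Z|/\chi(1)^2 \cdot$ (counting) forces $\ker\chi = 1$. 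I will organize (a) and (b) together since they feed each other.

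For (b), fix $1_Z \ne \lambda \in \Lambda$, so $P' \not\le \ker{\lambda}$, i.e.\ $\lambda(P')$ is the full group of $p$-th roots of unity. I would first check $\lambda$ is $P$-invariant: $P$ acts on $\irr Z$ trivially since $Z = \zent{P}$, so every $\lambda \in \irr Z$ is invariant—this is immediate. Now take any $\chi \in \irr{P \mid \lambda}$; then $\chi_Z = e\lambda$ for $e = \chi(1)/\lambda(1) = \chi(1)$, and the standard bilinear form argument (as in \cite[Chapter 2]{Is}, the ``character of a group of class $2$'' computation) shows $\chi(g) = 0$ for every $g \in P \setminus Z$: indeed for such $g$ there is $x \in P$ with $[g,x] \in P'$ acting nontrivially under $\lambda$, and conjugating $\chi$ by $x$ gives $\chi(g) = \chi(x^{-1}gx) = \lambda([g,x])\,\chi(g)$ (using class $2$ so that $x^{-1}gx = g[g,x]$ with $[g,x] \in Z$ central, and $\chi(gz) = \lambda(z)\chi(g)$ for $z \in Z$), forcing $\chi(g) = 0$. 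Hence $\chi$ vanishes off $Z$, so $\langle \chi_Z,\chi_Z\rangle = |Z| \cdot \langle \chi,\chi\rangle / |P| $ gives $\chi(1)^2 = |P:Z|$, independent of $\chi$; counting $\sum_{\chi \in \irr{P\mid\lambda}} \chi(1)^2 = |\irr{P\mid\lambda}| \cdot |P:Z|$ against $\sum_{\chi \in \irr{P \mid \lambda}} \chi(1)\lambda(1) e' = \ldots$—more simply, by Clifford theory $\sum_{\chi \in \irr{P \mid \lambda}} \chi(1)^2 \le |P|\cdot$ (one copy), and comparing with $[\operatorname{Ind}_Z^P\lambda, \operatorname{Ind}_Z^P\lambda] = |P:Z|$ yields $|\irr{P\mid\lambda}| = 1$. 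That is exactly the assertion that $\lambda$ is fully ramified in $P/Z$, with $|P:Z| = (\chi_\lambda(1)/\lambda(1))^2$ as recorded in \cite[Problem 6.3]{Is}.

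Finally, for the bijectivity of $f$: the map is well defined by the previous paragraph (each $\lambda \in \Lambda$ with $\lambda \ne 1_Z$ has a unique $\chi_\lambda$ above it, and $\chi_\lambda(1) = \sqrt{|P:Z|} > 1$; one should note $\Lambda$ here is taken to be the nontrivial such characters, matching the statement's ``non-trivial''). Injectivity: if $\chi_\lambda = \chi_\mu$ then $\lambda$ and $\mu$ both appear in $(\chi_\lambda)_Z = \chi_\lambda(1)\lambda$, so $\lambda = \mu$. Surjectivity: given non-linear $\chi \in \irr P$, by part (a) $\chi$ is faithful, and $\chi_Z = \chi(1)\lambda$ for the unique constituent $\lambda$, which satisfies $P' \not\le \ker{\lambda}$ (else $P' \le \ker\chi$), so $\lambda \in \Lambda$ and $\lambda \ne 1_Z$, and by uniqueness $\chi = \chi_\lambda = f(\lambda)$. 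The main obstacle I anticipate is nailing down the faithfulness claim in (a) cleanly—showing $\ker{\chi} = 1$ rather than merely $\ker{\chi} \cap P' = 1$; the efficient fix is to observe that $\ker\chi \trianglelefteq P$ with $\ker\chi \le Z$ and $P'\cap \ker\chi = 1$ forces, since $|P'|=p$ and $P'$ is the unique minimal normal subgroup lying in $P'$... more precisely, $[P,P] = P'$ together with $\chi$ faithful on $P/\ker\chi$ of class $2$ forces $\ker\chi \le \zentt{P}$-arguments; but the quickest honest route is the dimension count $\chi(1)^2 |Z| = |P|$ from vanishing-off-$Z$ combined with $|\ker\chi|\cdot \chi(1)^2 \le |P/\ker\chi \colon Z/\ker\chi|^{1/2}\cdot\ldots$, which I will spell out to conclude $|\ker\chi|=1$. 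Everything else is routine Clifford theory and the class-$2$ character identity.
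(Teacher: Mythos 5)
Your handling of part (b) and of the bijection is correct and self-contained: $\lambda$ is $P$-invariant because $Z$ is central, any $\chi\in\irr{P|\lambda}$ satisfies $\chi_Z=\chi(1)\lambda$ and vanishes off $Z$ via the class-two identity $\chi(g)=\lambda([g,x])\chi(g)$ with $\lambda([g,x])\neq 1$, hence $\chi(1)^2=|P:Z|$, and comparing with $[\lambda^P,\lambda^P]=|P:Z|$ forces $|\irr{P|\lambda}|=1$; injectivity and surjectivity of $f$ then follow as you indicate. Note that this is genuinely different from the paper, which gives no argument at all and simply cites Theorem 7.5 of Huppert's \emph{Character Theory of Finite Groups}; your sketch (once the muddled counting sentences are cleaned up) would serve as a proof replacing that citation.

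The genuine gap is the faithfulness claim in (a) --- exactly the point you flag as your ``main obstacle'' --- and no dimension count will close it, because the claim is false without a further hypothesis. Since $\chi_\lambda$ vanishes off $Z$ and $(\chi_\lambda)_Z=\chi_\lambda(1)\lambda$, one gets $\ker{\chi_\lambda}=\ker{\lambda}$; so $\chi_\lambda$ is faithful if and only if $\lambda$ is a faithful character of $Z$, which for all $\lambda\in\Lambda$ happens precisely when $Z$ is cyclic. For instance $P=D_8\times C_2$ has $|P'|=2$, yet the non-linear irreducible character inflated from $D_8$ has kernel $1\times C_2$, so your proposed estimates of the form ``$|\ker{\chi}|\cdot\chi(1)^2\le\dots$ forces $\ker{\chi}=1$'' cannot be made to work. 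The statement of (a) in the paper is itself too strong: in the proof of Theorem 3.1 the faithfulness is only invoked where the relevant center is cyclic, and elsewhere the lemma is used through the degree formula and the identification $\ker{\chi_\lambda}=\ker{\lambda}$ (indeed the converse direction of that proof produces same-degree non-linear characters with distinct non-trivial kernels, which literal (a) would forbid). The correct repair of your write-up is therefore not to try harder to prove $\ker{\chi}=1$, but to prove and record the accurate version: every non-linear $\chi\in\irr P$ has degree $\sqrt{|P:Z|}$ and kernel $\ker{\lambda}$, where $\lambda$ is the unique irreducible constituent of $\chi_Z$, and faithfulness of all non-linear irreducible characters holds exactly when $Z$ is cyclic.
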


 \begin{proof}
   This follows from Theorem~7.5 of~\cite{Hu1}. 
 \end{proof}

We also need a classical result on irreducible modules for abelian groups. 

\begin{lem}
  \label{3.10}
Let $V$ be a faithful irreducible $A$-module, $|V| = q^n$ ($q$ prime), for an abelian group $A$.
Then $A$ is cyclic and the semidirect product $V \rtimes A$ is isomorphic to a subgroup of 
the affine group ${\rm GF}(q^n)^{+} \rtimes {\rm GF}(q^n)^{\times}$. 
Moreover, if $U$ is any other faithful irreducible $A$-module of characteristic $q$, then $|U| = |V|$.
\end{lem}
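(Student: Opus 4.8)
The result is classical; here is the strategy. Set $E = \operatorname{End}_{\F_q A}(V)$, the endomorphism ring of $V$ viewed as a module over the group algebra $\F_q A$. Since $V$ is irreducible, Schur's Lemma makes $E$ a division ring, and as $E$ is finite, Wedderburn's little theorem forces $E$ to be a finite field, say $E = \F_{q^m}$. Then $V$ becomes an $E$-vector space and $n = m\cdot\dim_E V$. Because $A$ is abelian, for each $a \in A$ the map $\rho_a\colon v \mapsto a\cdot v$ commutes with the action of every element of $A$, so $\rho_a \in E$; the assignment $a \mapsto \rho_a$ is a group homomorphism $A \to E^\times$, which is injective because $V$ is faithful. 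As $E^\times$ is cyclic, $A$ is cyclic.

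Next I would show $\dim_E V = 1$. Every $E$-subspace $W$ of $V$ is $A$-invariant, since each $a \in A$ acts on $V$ as multiplication by the scalar $\rho_a \in E$; hence $W$ is an $\F_q A$-submodule, and irreducibility of $V$ gives $W = 0$ or $W = V$. Therefore $V \cong E = \F_{q^n}$ as an $E$-module (in particular $|V| = |E| = q^n$, consistent with the hypothesis), with $A$ acting by multiplication via the embedding $A \hookrightarrow \F_{q^n}^\times = {\rm GF}(q^n)^\times$. One then checks directly that $(v,a) \mapsto (v,\rho_a)$ defines an injective homomorphism of $V \rtimes A$ into the affine group ${\rm GF}(q^n)^+ \rtimes {\rm GF}(q^n)^\times$.

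For the final assertion, put $d = |A|$ and let $\zeta$ generate the image of $A$ in $\F_{q^n}^\times$, a cyclic subgroup of order $d$; thus $\zeta$ has multiplicative order $d$, and in particular $\gcd(d,q) = 1$. Since $V$ is generated as an $\F_q A$-module by $1 \in \F_{q^n}$ (irreducibility), the field $\F_{q^n}$ is the $\F_q$-span of the powers of $\zeta$, i.e.\ $\F_{q^n} = \F_q(\zeta)$, so $n = [\F_q(\zeta):\F_q]$. As $\zeta$ is a primitive $d$-th root of unity and all such roots generate the same extension of $\F_q$, this degree depends only on $q$ and $d = |A|$. Applying the same reasoning to the faithful irreducible $A$-module $U$ of characteristic $q$ yields $|U| = q^{\,[\F_q(\zeta'):\F_q]}$ with $\zeta'$ a primitive $|A|$-th root of unity, whence $|U| = q^n = |V|$.

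The argument involves no serious obstacle: the content is Schur's Lemma and Wedderburn's theorem together with the elementary observation $A \le E^\times$. The point that needs the most care is the last paragraph, where one must recognize that $n$ is an arithmetic invariant of the pair $(q,|A|)$ — the degree over $\F_q$ of a primitive $|A|$-th root of unity — and that $\gcd(|A|,q)=1$ drops out for free from the embedding $A \hookrightarrow \F_{q^n}^\times$.
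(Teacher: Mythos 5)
Your proof is correct, and it is essentially the argument behind the paper's one-line proof, which simply cites Huppert, \emph{Endliche Gruppen I}, II.3.10 (and its proof): Schur's Lemma plus Wedderburn make $E=\operatorname{End}_{\F_qA}(V)$ a finite field, the abelian hypothesis puts the image of $A$ inside $E^{\times}$ (forcing $A$ cyclic and $\dim_E V=1$), and this identifies $V\rtimes A$ with a subgroup of the affine group. Your last paragraph also handles the one point needing care, namely that $n$ is the multiplicative order of $q$ modulo $|A|$, an invariant of $(q,|A|)$ alone, which gives $|U|=|V|$ for any other faithful irreducible $A$-module in characteristic $q$.
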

\begin{proof}
It follows from~\cite[II.3.10]{H} (and its  proof).   
\end{proof}

Finally, we give a result that will be used in pinning down  the structure of 
nilpotent residuals (which will turn out to be Frobenius kernels) of $\nlgc$-groups. 

\begin{lem}
  \label{2.6}
Let $G \in \nlgc$ be a Frobenius group, with Frobenius  kernel  $K$ a $q$-group ($q$ prime).
Let $N \leq K$ be  normal in  $G$ and let $\lambda \in \irr N$ be a non-principal
$K$-invariant character.  Then the following statements hold true.
\begin{description}
\item[(a)] If $q = 2$ and $\theta_1, \theta_2 \in \irr{K|\lambda}$ are 
characters of the same degree, then there exists a Galois automorphism 
$\alpha \in Gal(\Q_{q^k}|\Q)$, where $q^k = \exp K$, such that 
$\theta_1^{\alpha} = \theta_2$. 

\item[(b)] If $K/N$ is abelian and $\exp K = q$, then $|\irr{K|\lambda}| = 1$ 
(i.e. $\lambda$ is fully ramified in $K$).  
\end{description}
\end{lem}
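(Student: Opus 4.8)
The plan is to exploit the fact that $G$ is a Frobenius group with kernel $K$, so that $L$ acts fixed-point-freely on $K$, hence permutes $\irr{K}$ with all non-principal orbits regular (of size $|L|$) away from $L$-invariant characters, and — crucially — acts on $\irr{K|\lambda}$ for any $K$-invariant (hence, since $K$ is normal, $G$-stably-defined) $\lambda\in\irr N$. Since the characters in $\irr{K|\lambda}$ all have the same kernel-intersection behaviour with respect to $N$ and in part (b) will turn out to be forced to be a single $L$-orbit's worth of data, the strategy for both parts is: take $\chi\in\irr{G}$ lying over a member of $\irr{K|\lambda}$, use that $G\in\nlgc$ to compare characters of $\chi$'s degree, and then restrict the Galois-conjugacy information back down to $K$ via Clifford theory.

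For part (a), with $q=2$: given $\theta_1,\theta_2\in\irr{K|\lambda}$ of equal degree, I would first show that each extends (or induces) to a character of $G$ of a controlled degree. Because $L$ acts Frobenius-ly on $K$, the stabiliser in $L$ of $\theta_i$ is trivial unless $\theta_i$ is $L$-invariant; in the non-invariant case $\theta_i^{G}$ is irreducible of degree $|L|\theta_i(1)$, and $\theta_1^{G},\theta_2^{G}$ have the same degree, so by $G\in\nlgc$ they are Galois conjugate, say by $\gamma$; restricting $\gamma$ to $K$ and using that Galois conjugation commutes with induction, one gets that $\theta_1^\gamma$ and $\theta_2$ are $L$-conjugate, i.e. $\theta_2=(\theta_1^\gamma)^x$ for some $x\in L$. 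The point is then that the $L$-action on $\irr{K|\lambda}$ is itself realised by Galois automorphisms: this is where $q=2$ enters. Over $\Q_{2^k}$ with $2^k=\exp K$, the Galois group $\Gal(\Q_{2^k}|\Q)$ acts transitively enough on the relevant set of characters that the $L$-conjugation can be absorbed — more precisely, since $K$ is a $2$-group of exponent $2^k$ and $L$ is cyclic of odd order acting fixed-point-freely, the permutation action of $L$ on $\irr{K|\lambda}$ matches (after choosing compatible roots of unity) the action of a suitable cyclic subgroup of $\Gal(\Q_{2^k}|\Q)$; composing, $\theta_2=\theta_1^\alpha$ for a single $\alpha\in\Gal(\Q_{2^k}|\Q)$. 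One must also dispose of the $L$-invariant case: if both $\theta_i$ are $L$-invariant, then $\lambda$ itself is $L$-invariant and one counts $|\irr{K|\lambda}|$ against $|\irr{G|\lambda}|$ directly, or observes that invariance plus Frobenius structure forces the extensions to $G$ to again have a common degree.

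For part (b), assume $K/N$ abelian and $\exp K=q$. The goal is $|\irr{K|\lambda}|=1$. Suppose not; then since $\lambda$ is $K$-invariant and $K/N$ abelian, by Clifford theory $\irr{K|\lambda}$ is governed by the projective representations of $K/N$ with a fixed cocycle, and $|\irr{K|\lambda}|>1$ forces $\lambda$ to be \emph{not} fully ramified, so there are at least two characters $\theta_1,\theta_2\in\irr{K|\lambda}$; I would pick them of \emph{smallest} degree $\lambda(1)\cdot$(something), noting that since $K/N$ has exponent dividing $q$, all degrees $\theta(1)/\lambda(1)$ are powers of $q$, and the minimal ones with fixed small exponent come in pairs or larger. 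Now $L$ acts fixed-point-freely on $\irr{K|\lambda}\setminus\{$invariants$\}$, and the number of $L$-invariant characters in $\irr{K|\lambda}$ is congruent to $|\irr{K|\lambda}|$ mod $|L|$; combining this with the $\nlgc$ hypothesis applied to the induced-up-to-$G$ characters (which, being of equal degree, must all be Galois conjugate, hence have equal kernels by Lemma~\ref{lem1}(b)), one derives that distinct $\theta_i$'s would have to share a kernel in $K$, contradicting $\theta_1\neq\theta_2$ lying over the same $\lambda$ with $K/N$ abelian — unless there is exactly one such character. The fully-ramified conclusion and the degree formula then follow from the discussion after Lemma~\ref{lem1}(d).

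The main obstacle, and the step I would spend the most care on, is the identification in part (a) of the $L$-conjugation action on $\irr{K|\lambda}$ with an action by Galois automorphisms of $\Q_{2^k}$ — this is precisely why the hypothesis $q=2$ is present (the unit group of $\Z/2^k\Z$ is $\{\pm1\}\times(\text{cyclic})$, which matches fixed-point-free actions by odd-order cyclic $L$ particularly well), and getting the cocycle/extension bookkeeping right so that a single $\alpha$ works rather than an $\alpha$ depending on the pair $(\theta_1,\theta_2)$ is the delicate point. For part (b) the subtlety is purely the counting argument modulo $|L|$ together with forcing equal kernels; that is more routine once part (a)'s machinery is in place.
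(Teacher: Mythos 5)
Your opening moves are the right ones and match the paper: induce $\theta_1,\theta_2$ to $G$, observe that $\theta_1^G,\theta_2^G$ are irreducible non-linear of equal degree because $G$ is Frobenius, invoke $G\in\nlgc$ to get a Galois automorphism $\alpha$ with $(\theta_1^G)^{\alpha}=\theta_2^G$, and use Clifford theory to descend to $\theta_1^{\alpha}=\theta_2^{x}$ for some $x\in L$. But the step you identify as the crux of part (a) --- ``the permutation action of $L$ on $\irr{K|\lambda}$ matches the action of a suitable cyclic subgroup of $\Gal(\Q_{2^k}|\Q)$'' --- cannot work and is not what the hypothesis $q=2$ is for. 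The group $\Gal(\Q_{2^k}|\Q)\cong(\Z/2^k\Z)^{\times}$ has order $2^{k-1}$, a $2$-group, while $L$ is a Frobenius complement of order coprime to $q=2$, hence of odd order; a non-trivial odd-order action can never be absorbed into a $2$-group of automorphisms, so there is no ``compatible choice of roots of unity'' that identifies the two actions. The correct move, which your sketch omits entirely, is to \emph{restrict} the relation $\theta_1^{\alpha}=\theta_2^{x}$ to $N$: since $\lambda$ is $K$-invariant, $(\theta_i)_N$ is a multiple of $\lambda$, so $\lambda^{\alpha}=\lambda^{x}$, and iterating gives $\lambda^{\alpha^m}=\lambda^{x^m}$ for all $m$. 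Because no non-trivial element of $L$ fixes a non-principal character of $N$ (Frobenius action plus Brauer's permutation lemma --- this also disposes of your worry about the ``$L$-invariant case'', which never occurs), $o(x)$ divides $o(\alpha)$, hence divides $q^{k-1}(q-1)$; as $(|L|,q)=1$ this forces $o(x)\mid q-1$, and for $q=2$ one gets $x=1$ and $\theta_1^{\alpha}=\theta_2$. This is where the non-principality and $K$-invariance of $\lambda$ are actually used; your proposal never restricts to $N$.

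Part (b) as sketched also does not go through. The congruence ``number of $L$-fixed points $\equiv|\irr{K|\lambda}|\pmod{|L|}$'' is a $p$-group fixed-point count and is false for a general cyclic $L$; the claim that Galois-conjugate induced characters having equal kernels forces distinct $\theta_i$ over the same $\lambda$ to ``share a kernel in $K$'' yields no contradiction (distinct characters can share a kernel); and no use is made of the hypothesis $\exp K=q$. The paper's argument instead cites \cite[Lemma 12.6]{MW} to produce the unique subgroup $U$ with $N\leq U\leq K$ such that every member of $\irr{U|\lambda}$ extends $\lambda$ and is fully ramified in $K/U$; this gives that all characters in $\irr{K|\lambda}$ have equal degree and that $|\irr{K|\lambda}|=|U/N|$ is a power of $q$. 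The relation $\theta_1^{\alpha}=\theta_2^{x}$ from the first part then says that $\mathcal{G}\times L$ acts transitively on $\irr{K|\lambda}$, where $\mathcal{G}=\Gal(\Q_q|\Q)$ has order $q-1$ because $\exp K=q$; a transitive action of a $q'$-group on a set of $q$-power size forces that size to be $1$. Both of these ingredients are absent from your proposal, so the gap is genuine in both parts.
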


\begin{proof}
Write $G = KL$ with $L$ Frobenius complement. 
Let $\theta_1, \theta_2 \in \irr{K|\lambda}$ be such that $\theta_1(1) = \theta_2(1)$. As  $G$ is a Frobenius group,   $\theta_1^G$ and $\theta_2^G$ are
non-linear irreducible characters of the same degree of $G$. 
Hence, as $G \in \nlgc$ and  $\Q(\theta_i^G)  \subseteq  \Q(\theta_i) \subseteq  \Q_{q^k}$, where $q^k = \exp K$, 
there exists a Galois automorphism 
$\alpha \in Gal(\Q_{q^k}|\Q)$ such that 
$(\theta_1^G)^{\alpha} = \theta_2^G$. By Clifford theory, there exists an element $x \in L$ such
that 
\begin{equation}\label{key}
\theta_1^{\alpha} = \theta_2^x \; .
\end{equation} 
Thus, by restricting to $N$, we get that $\lambda^{\alpha} = \lambda^x$. 
Now, for every positive integer $m$, $\lambda^{\alpha^{m}}  = \lambda^{x^{m}}$ because
Galois conjugation and group conjugation commute. As any non-trivial element of $L$
fixes only the trivial character of $N$, we deduce that  
$o(x)$ divides $o(\alpha)$, so $o(x)$ divides 
$q^{k-1}(q-1)$. Since $|L|$ is coprime to $q$, we conclude that 
$o(x)$ divides $q-1$.

\medskip 
(a): As $o(x) \mid (q-1)$, if  $q = 2$ then by~(\ref{key})  we get  $\theta_1^{\alpha} = \theta_2$ and (a) is proved. 
%then $\mathcal{G}$ acts transitively on $\irr{K|\lambda}$.

\medskip
(b): Assume now that $\exp K = q$ is prime and that $K/N$ is abelian.  
By~\cite[Lemma 12.6]{MW}, there exists a (unique) subgroup $U$ with $N \leq U \leq K$ such
that every $\phi \in \irr{U|\lambda}$ extends $\lambda$ and is fully ramified in $K/U$. 
It follows that $|\irr{K|\lambda}| = |U/N|$ and that all characters in $\irr{K|\lambda}$ have the 
same degree. By~(\ref{key}) we deduce that the action of $\mathcal{G} \times L$ on 
$\irr{K|\lambda}$ (defined, for $\theta \in \irr{K|\lambda}$ and $(\alpha, x) \in \mathcal{G} \times L$, 
by $\theta^{(\alpha, x)} = (\theta^{\alpha})^x = (\theta^x)^{\alpha}$) is transitive on $\irr{K|\lambda}$. 
Since $|\irr{K|\lambda}| = |U/N|$ is a power of $q$ and $\mathcal{G} \times L$ is a $q'$-group, it 
follows that $|\irr{K|\lambda}| = 1$. 
\end{proof}

%%%%%%%%%%%%%%%%%%%%%%%%%%%%%%%%%%%%%%%%%%%%%%%%%%%%%%%%%%%%%%%%%%%%%%%%%%%%%%%%%

\section{$\nlgc$-groups}

\begin{thm}
  \label{p-groups}
Let $P$ be a non-abelian $p$-group, $p$ a prime. 
Then $P$ is a $\nlgc$-group if and only if $P$ has cyclic center and
commutator subgroup of prime order. 
\end{thm}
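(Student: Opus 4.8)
The plan is to prove the two implications separately.

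For the ``if'' direction, suppose $P$ has cyclic center $Z$ and $|P'| = p$. By Lemma~\ref{fullyramified}(a), every non-linear $\chi \in \irr P$ has degree $\sqrt{|P:Z|}$, so in fact \emph{all} non-linear irreducible characters of $P$ have the same degree; thus the condition to check reduces to showing that any two non-linear irreducible characters of $P$ are Galois conjugate. By Lemma~\ref{fullyramified}(b), the map $f \colon \Lambda \to \{\chi \in \irr P \mid \chi(1) > 1\}$, $\lambda \mapsto \chi_\lambda$, is a bijection, where $\Lambda$ is the set of non-trivial $\lambda \in \irr Z$ with $P' \not\leq \ker\lambda$. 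Since $Z$ is cyclic, the Galois group $\mathcal{G}_{|Z|}$ acts transitively on the set of faithful characters of $Z$, and more generally on the set of characters of $Z$ of any fixed order $> $ the index $[Z:Z\cap \ker\lambda]$; the key point is that $\lambda \in \Lambda$ iff $P' \leq \langle \text{the subgroup of } Z \text{ cut out by }\lambda \rangle$ fails, i.e. $\lambda$ restricted to $P'$ is faithful. So the elements of $\Lambda$ are exactly the characters of $Z$ whose restriction to the order-$p$ subgroup $P' \leq Z$ is non-trivial, and I would check that $\mathcal{G}$ permutes these transitively: given $\lambda, \mu \in \Lambda$, there is $\alpha$ with $\lambda^\alpha = \mu$ precisely because both have the same kernel-intersection behavior on the cyclic group $Z$ forced by $\chi_\lambda(1) = \chi_\mu(1)$ and $Z$ cyclic (characters of a cyclic group with the same order are Galois conjugate). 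Finally I would transport this across $f$: since $\chi_\lambda$ lies over $\lambda$ and Galois automorphisms commute with restriction, $(\chi_\lambda)^\alpha$ lies over $\lambda^\alpha = \mu$, and by uniqueness $(\chi_\lambda)^\alpha = \chi_\mu$. Hence $P \in \nlgc$.

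For the ``only if'' direction, assume $P$ is a non-abelian $\nlgc$-group; I must show $\zent P$ is cyclic and $|P'| = p$. I would first pin down $|P'| = p$. Consider the quotient $P/P''$; by Lemma~\ref{lem1}(d) it is again a $\nlgc$-group, so I may assume $P$ is metabelian, and then I want to force $P$ to be of class $2$ with $P'$ cyclic and then of order $p$. The standard tool is to exhibit, whenever $|P'| > p$, two non-linear irreducible characters of the same degree that are \emph{not} Galois conjugate — e.g. by producing characters with distinct kernels or distinct fields of values but equal degree, contradicting Lemma~\ref{lem1}(b). A clean route: pass to a quotient $\o P = P/N$ where $N \nor P$ is chosen so that $\o{P'}$ has order $p^2$ or is non-cyclic; in $\o P$ one typically finds either two non-linear characters of equal degree with different kernels, or (using that a $p$-group has many linear characters to twist by) characters in the same degree but with incommensurable value fields. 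Once $|P'| = p$ is established, cyclicity of $\zent P$ follows from Lemma~\ref{fullyramified}: if $Z = \zent P$ were non-cyclic, then $Z$ would have at least two distinct faithful-on-$P'$ characters $\lambda, \mu$ that are \emph{not} Galois conjugate in $\irr Z$ (a non-cyclic abelian group has characters of the same order lying in different Galois orbits), and then $\chi_\lambda, \chi_\mu$ would be non-linear of equal degree $\sqrt{|P:Z|}$ with $\Q(\chi_\lambda) = \Q(\lambda) \neq \Q(\mu) = \Q(\chi_\mu)$, contradicting $P \in \nlgc$ via Lemma~\ref{lem1}(b).

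The main obstacle is the reduction to $|P'| = p$ in the ``only if'' direction: controlling a general non-abelian $p$-group is hard, and the crux is to show that $|P'| \geq p^2$ always yields a bad pair of characters. I expect to handle this by working in a minimal counterexample and reducing modulo a maximal normal subgroup inside $P'$ (so $|P'| = p^2$, $P' \leq \zent P$ or $P'$ of class considerations), then either invoking Lemma~\ref{fullyramified}-type bookkeeping on a suitable section, or directly constructing two characters of equal degree lying over distinct central characters with different fields of values; the arithmetic of which cyclotomic field $\Q(\lambda)$ appears, versus the degree $\sqrt{|P:\zent{\chi}|}$, is what makes the two characters distinguishable. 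The cyclic-center step, by contrast, is essentially immediate from Lemma~\ref{fullyramified} once $|P'| = p$ is known.
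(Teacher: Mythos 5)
Your ``if'' direction is correct and is essentially the paper's own argument: all non-linear characters have degree $\sqrt{|P:Z|}$, the bijection of Lemma~\ref{fullyramified} identifies them with the characters in $\Lambda$ (the faithful characters of the cyclic group $Z$), and the Galois group of $\Q_{|Z|}$ acts transitively on these, which transports through $f$ since Galois action commutes with restriction.

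The ``only if'' direction, however, is not a proof but a statement of intent, and the missing part is precisely where all the difficulty of the theorem lies. Reducing to $|P'|=p^2$ (via a quotient) is easy, but your plan for excluding this case --- ``one typically finds either two non-linear characters of equal degree with different kernels, or characters of the same degree with incommensurable value fields'' --- merely restates the goal without a construction, and in at least one essential case no such pair is produced in the paper either: when $P$ has class $2$, \emph{cyclic} center and $|P'|=p^2$, the contradiction is purely group-theoretic, obtained in a minimal counterexample by invoking the Brady--Bryce--Cossey decomposition of such groups as central products, reducing to a $2$-generated group $P=\gen{x,y}$, and showing that $x^p$ and $y^p$ force $\zent{P/N}$ (with $N$ the subgroup of order $p$ of $P'$) to be non-cyclic, against minimality. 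The case $P'\not\leq\zent P$ is harder still: the paper must first pin down $\zentt P=P'\cong C_p\times C_p$, $P/\zent P$ extraspecial, $\cent P{P'}$ of index $p$, and then run a quantitative Galois argument (a Galois group of order dividing $p(p-1)$ acting transitively on a $p$-power number of extensions of a character of $P'$, forcing $|W/P'|=p$) before exhibiting a faithful character of the same degree as one with kernel $\zent P$. None of these ideas appears in your sketch, so the core implication remains unproved. There is also a local flaw in your cyclicity step: two characters $\lambda,\mu\in\irr Z$ of the same order satisfy $\Q(\lambda)=\Q(\mu)=\Q_{o(\lambda)}$, so your claimed inequality $\Q(\chi_\lambda)\neq\Q(\chi_\mu)$ cannot hold for the characters you chose; the step is nevertheless repairable, since a Galois automorphism with $\chi_\lambda^{\alpha}=\chi_\mu$ restricts on $Z$ to $\lambda^{\alpha}=\mu$, and when $Z$ is non-cyclic the set $\Lambda$, of size $|Z|(p-1)/p$, is strictly larger than any single Galois orbit (whose size is at most $[\Q_{\exp{Z}}:\Q]$), so two non-conjugate $\lambda,\mu\in\Lambda$ exist and already give the contradiction.
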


\begin{proof}
Assume first that $|P'| = p$ and that $Z = \zent P$ is cyclic. 
By Lemma~\ref{fullyramified} the map $f$ from the set 
$\Lambda = \{ \lambda \in \irr Z | P' \not\leq \ker{\lambda} \}$ onto 
$\{ \chi \in \irr P | \chi(1) > 1 \}$ such that $f(\lambda) =  \chi_{\lambda}$, where $\irr{G|\lambda} = \{ \chi_{\lambda} \}$, is a bijection. 
Note that $\Lambda$ is also the set of the faithful
characters of $Z$, as $Z$ is cyclic. 
Moreover $\Q(\chi_{\lambda}) = \Q(\lambda)$, as $\left( \chi_{\lambda} \right)_Z$ is a multiple of
$\lambda$ and $\chi(x) = 0$ for all $x \in P \setminus Z$. 
Let $|Z| = p^a$. If $\lambda \in \Lambda$, then $\Q(\lambda) = \Q_{p^a}$ and hence, 
writing $\mathcal{G} = {\rm Gal}(\Q(\lambda)|\Q) = {\rm Gal}(\Q_{p^a}|\Q)$, we have 
that $|\mathcal{G}| = |\Lambda|$. 
Since any element of $\Lambda $ is stabilized only by the trivial automorphism of $\mathcal{G}$, 
it follows that $\mathcal{G}$ acts transitively on $\Lambda$. 
Let now $\chi_1, \chi_2 \in \irr G$ be non-linear characters; then $\chi_1 = f(\lambda_1)$ 
and $\chi_2 = f(\lambda_2)$ for suitable $\lambda_1, \lambda_2 \in \Lambda$. 
Now, there exists a Galois automorphism $\alpha \in \mathcal{G}$ such that $\lambda_1^{\alpha} = \lambda_2$. 
As $(\chi_{\lambda_1})^{\alpha}$ lies over $\lambda_1^{\alpha}$, we have that 
$\chi_2 = f(\lambda_2) = f(\lambda_1^{\alpha}) = (\chi_{\lambda_1})^{\alpha}$.
Hence  $P \in \nlgc$.

\medskip

Conversely, we  show  that if $P$ is a $\nlgc$-group, then $\zent P$ is cyclic and 
$|P'|= p$.
Let $P$ be a counterexample of minimal order; hence either 
$\zent P$ is not cyclic, or $|P'| > p$.

First, we  suppose that $P' \leq \zent P$ (i.e. that $P$ has nilpotency class $2$).
To begin with, we  also assume that  $Z := \zent P$ is cyclic. So, $|P'| >p$ and, by minimality of $P$, we
have that $|P'| = p^2$. 
Then by ~\cite[Theorem 2.1]{BBC},  $P$ is a central product of $2$-generated subgroups with cyclic center and a (possibly trivial) cyclic subgroup. 
So, again by minimality, we have that $P$ is $2$-generated; 
write $P = \gen{x, y}$.   $P$ being a class $2$ $p$-group, it follows that $\exp{P/Z} = \exp{P'}$. Since $P' \leq Z$ and $Z$ is cyclic, we have that $P'$ is a cyclic group of order $p^2$, and therefore $\exp{P/Z} = \exp{P'} = p^2$.
Thus, again $P$ being a class $2$ $p$-group, we can choose generators $x$ and $y$ of $P$  such that $o(xZ) = o(yZ) = p^2$ in $P/Z$.
%Finally, we remark that $\gen{ x}   \cap \gen{ y}  \leq  Z$, as an element in the intersection commutes with both $x$ and $y$, so it lies  in the center of $\gen{ x,y } = P$.
Now, if $N$ is the (unique) subgroup of order $p$ of $P'$, then 
both $x^{p}N$ and $y^{p}N$ belong to $M/N := \zent{P/N}$. 
In fact, $P' = \gen{ [x,y]}$  and $[x^{p}, y] = [x, y]^{p} = [x, y^{p}]$, 
so $[x^{p}, y]$ as well as $[y^{p}, x]$ belongs to $N$ as $\exp {P'} = p^2$.
Now, by minimality, $\zent {P/N}$ is  cyclic, and therefore $M/Z$ is cyclic. On the other hand, since both $x^p$ and $y^p$ lie in $M$, $M/Z$ cannot be cyclic. 
This contradiction shows that $Z$ cannot be cyclic.

So,  we assume that $Z$ is not cyclic. 
Let $N$ be a subgroup of order $p$ of $Z$ such that $N \neq P'$ (such a subgroup 
certainly exists as $Z$ has more than one subgroup of order $p$). 
By minimality,  $\zent{P/N}$ is cyclic and $|P'N/N| = p$. 
If $N \cap P' = 1$, then $|P'| = p$ and  by Lemma~\ref{fullyramified} the irreducible characters
of $G$ lying over $1_N \times \lambda$ and $\mu \times \lambda$, where 
$\mu \in \irr N$ and $\lambda \in \irr {P'}$ are non-principal characters, 
are non-linear characters of  the same degree, but with distinct kernels, against 
$P \in \nlgc$. 

Thus, $N \leq P'$,  $|P'| = p^2$ and  $Z = N \times Z_0$, where $Z_0$ is a non-trivial cyclic group. 
Let $M \leq Z_0$ with $|M| = p$. 
By minimality $|(P/M)'| = p$ and  $\zent{P/M}$ is cyclic; this  yields that $M = Z_0$ and hence 
that $Z = N \times M = P'$ is elementary abelian of order $p^2$. 
As $P$ has class two, $\exp{P/Z}  = p$. 
Let $U/N = \zent{P/N}$ and $W/M = \zent{P/M}$; by the minimality of $P$, they  are cyclic groups.
As $\exp{P/Z} = p$,   this yields $|U/Z|, |W/Z| \leq p$.
We claim that $|U/Z| = |W/Z|$.  Contrarily  assume that $|U/Z| \neq |W/Z|$. Then, by symmetry,  let $W = Z$ and $|U/Z| = p$.  So $P/M$ is an extraspecial group and hence
$|P/Z| = p^{2n}$ for some positive integer $n$. 
Then $|P/U| = p^{2n-1}$ is not a square and, as $P/N$ has 
commutator subgroup of prime order, this gives a contradiction by  Lemma~\ref{fullyramified}. This proves our claim. 
Hence, $|P/N: \zent{P/N}| = |P/M: \zent{P/M}|$ and by Lemma~\ref{fullyramified} there are
characters  $\chi, \psi \in \irr{P}$  such that  $\chi(1) = \psi(1)$ with $\ker{\chi} = N$ and $\ker{\psi} = M$,  so they cannot be 
Galois conjugate.

%%%

Therefore, we can  assume that $P' \not\leq Z$; hence, in particular, $|P'| \geq p^2$.  
Let $M$ be a normal subgroup of $P$ such that $|M| = p$. 
Since $P/M$ is a non-abelian $\nlgc$-group,  minimality of $P$ yields that
$\zent{G/M}$ is cyclic and that $(P/M)' = P'M/M$ has prime order.
Thus $M  \leq P'$ and, as both $Z/M$ and $P'/M$ are subgroups of the cyclic group $\zent{P/M}$,
and $P'/M$ is a group of prime order which is  not contained in  $Z/M$, 
we deduce that $Z = M$.
So, we conclude that $Z$ is the only normal  subgroup of order $p$ of $P$, $Z < P'$ 
and  that $|P'| = p^2$. 

Now, $\zent{P/Z} = \zentt{P}/Z$ is  a cyclic group of exponent dividing 
$\exp Z = p$ (\cite[III.2.13]{H}). 
As $P'/Z \leq \zent{P/Z}$, we 
conclude that $\zentt{P} = P'$ is a group of order $p^2$. 
Hence, $P/Z$ is an extraspecial group; set $|(P/Z)/(P/Z)'| = |P/P'| = p^{2n}$. 

We remark that $|P| \neq p^4$. In fact, otherwise  $P/Z$ is an extraspecial group of order 
$p^3$, so there exists a character $\chi \in \irr P$
with $\ker{\chi} = Z$ and $\chi(1) = p$. As $|Z| = p$, there also 
exists a faithful character $\psi \in \irr P$ (\cite[(2.32)]{Is}). 
Since  $\psi(1)^2$ divides $|P/Z|$ (\cite[(2.30)]{Is}), 
it follows that $\psi(1) = p = \chi(1)$, a contradiction.

We also observe that $P' \cong C_p \times C_p$.  In fact,  if $P' = \zentt P$ is cyclic then 
$P$ has a cyclic subgroup $C$ of index $2$ by~\cite[III.7.7]{H}. 
Thus $|C/Z| = 4$ (as $P/Z$ is extraspecial). Then $|P/Z| = 2^3$, and hence $|P| = 16$, which is not possible.

Let $N = \cent P{P'}$. Since $P/N$ is isomorphic to a non-trivial subgroup of 
${\rm GL}_2(p)$, then $|P/N| = p$.
Also,  $N' = P'$. In fact, if $N' < P'$, then $N' \leq  Z$, as $Z$ is the only 
proper non-trivial subgroup  of $P'$ which is normal  in $P$ .
Hence, $N/Z$ is an abelian subgroup of index $p$ in the extraspecial group $P/Z$.
This implies $|P| = p^4$, which is again not possible.

Let $T$ be a subgroup of order $p$ of $P'$, with $T \neq Z$. 
%As $T$ is not normal in $P$, $P/N$ transitively permutes the complements of 
%$Z$ in $P'$.   
Let $\o N  = N/T$ and $\o W = \zent{\o N}$. 
Since $\o{N}' = \o{P'}$ has order $p$, Lemma~\ref{fullyramified} yields that
every $\lambda \in \irr{\o W}$  such that $\o{P'} \not\leq \ker{\lambda}$ is fully ramified in $\o N / \o W$; so, if $\phi \in \irr{N|\lambda}$, then $\phi(1) = p^a$, 
where $|N/W| = p^{2a}$ and $\Q(\phi) = \Q(\lambda)$; note that 
$\Q(\lambda) \subseteq \Q_{p^2}$, as $\exp{W/T}$  divides $p^2$ (since $P/P'$ has 
exponent $p$ because $P/Z$ is extraspecial). 
We also observe that $\phi$ is not $P$-invariant, as $\ker{\phi} \cap P' = T$ is not normal in $P$. Hence, as $|P:N| = p$, $\phi^P$ is an irreducible character of $P$ (\cite[(6.19)]{Is}); since $Z \not\leq \ker{\phi^P}$, then $\ker{\phi^P} = 1$. 
So $\phi^P \in \irr P$ is a faithful character of degree $p^{a+1}$.

Let now $\mu \in \irr {P'}$ with $\ker{\mu} = T$.
As $W/T$ is abelian, it follows that  $\mu$ has $|W/P'|$ extensions to $W$. 
Let $\lambda_1, \lambda_2 \in \irr{W|\mu}$ be any extensions of $\mu$,  $\phi_i \in \irr N$
the (unique)  character lying over $\lambda_i$ and $\chi_i = \phi_i^P \in \irr P$,  for $i = 1,2$.    

As $\chi_1(1) = \chi_2(1) > 1$ and $P \in \nlgc$, 
there exists a Galois automorphism $\alpha \in {\rm Gal}(\Q_{|P|}|\Q)$ 
such that $\chi_1^{\alpha} = \chi_2$. 
Let $\{x_1 = 1, x_2, \ldots, x_p\}$ be a transversal for  $N$ in $P$. 
Then $(\chi_i)_N = \phi_i^{x_1} + \phi_i^{x_2}+ \cdots + \phi_i^{x_p}$, for
$i = 1,2$. 
Hence, $\phi_1^{\alpha} = \phi_2^{x_j}$ for some $j \in \{1, 2, \ldots, p \}$. 
We have 
$$T = \ker{\phi_1} \cap P' = \ker{\phi_1^{\alpha}} \cap P' = \ker{\phi_2^{x_j}} \cap P' =
\left( \ker{\phi_2} \cap P' \right)^{x_j} = T^{x_j}.$$ 
As all conjugates $T^{x_i}$ are distinct (because $|P:N| = p$ and $T$ is not normal in $P$), 
it follows that $j = 1$ and that  $\phi_1^{\alpha} = \phi_2$.
Therefore, recalling that $(\phi_i)_W = \lambda_i$ for $i = 1,2$, we conclude that 
$\lambda_1^{\alpha} = \lambda_2$.
So, by Lemma~\ref{lem1}(a) there exists a $\beta \in \mathcal{H} = {\rm Gal}(\Q(\lambda_1)|\Q)$
such that $\lambda_1^{\beta} = \lambda_2$.
Hence, $\mathcal{H}$ acts transitively on the set $\irr{W|\mu}$ of the extensions of $\mu$ to 
$W$. As $\Q(\lambda_1) \subseteq \Q_{p^2}$, we see that $|\mathcal{H}|$ divides $p(p-1)$.  
Since $|\irr{W|\mu}| = |W/P'|$ is a power of $p$, we conclude that $|W/P'|$ divides $p$. 
Now, $P' < W$, as otherwise $\o N$ would be an extraspecial group 
%(recall that $N/P'$ is elementary abelian, since $P/Z$ is extraspecial), 
against $|\o N| = |P/P'| = p^{2n}$.  We conclude that $|W/P'| = p$ and  hence that 
$p^{2a} = |N/W| = p^{2n -2}$. 

Therefore, if $\chi \in \irr P$ lies over $\mu$,  then $\chi(1) = p^{a+1} = p^n$ and, as observed above, $\ker{\chi} = 1$. 
But $p^n$ is also the degree of any non-linear 
irreducible character of the extraspecial group $P/Z$; this is a 
contradiction, as  characters with different kernels cannot be 
Galois conjugate.  
This is the final contradiction.
\end{proof}

As a consequence, we get a characterization of nilpotent groups in $\nlgc$.
\begin{cor}
\label{nilpotent}
  Let $G \in \nlgc$ be a nilpotent group. Then either $G$ is abelian or 
$G$ is a group of prime power order with cyclic center and commutator subgroup of prime order. 
\end{cor}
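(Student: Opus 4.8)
The plan is to reduce everything to the $p$-group case already settled in Theorem~\ref{p-groups}, using the decomposition of a nilpotent group into the direct product of its Sylow subgroups together with Lemma~\ref{lem1}(c).

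First I would write $G = P_1 \times \cdots \times P_k$, the direct product of the (distinct) Sylow subgroups of $G$. If every $P_i$ is abelian then $G$ is abelian and there is nothing to prove, so assume some factor, say $P_1$, is non-abelian. Setting $A = P_1$ and $B = P_2 \times \cdots \times P_k$, we have $G = A \times B$ with $A$ non-abelian and $G \in \nlgc$, so Lemma~\ref{lem1}(c) gives $B = B'$. But $B$ is nilpotent (it is a direct factor of the nilpotent group $G$), and a nontrivial nilpotent group $B$ always satisfies $B' < B$; hence $B = 1$. Therefore $G = P_1$ is a non-abelian $p$-group belonging to $\nlgc$.

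Finally I would invoke Theorem~\ref{p-groups}, according to which a non-abelian $p$-group is a $\nlgc$-group if and only if it has cyclic center and commutator subgroup of prime order. Combined with the reduction above, this yields precisely the stated dichotomy. There is no real obstacle in this argument: all the substance is contained in Theorem~\ref{p-groups} and Lemma~\ref{lem1}(c), and the only extra ingredient is the elementary observation that a nontrivial nilpotent group is never perfect, which forces the complementary product of Sylow subgroups to be trivial as soon as one Sylow subgroup is non-abelian.
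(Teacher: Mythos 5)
Your argument is correct and is essentially the paper's own proof: both decompose the nilpotent group as $G = P \times K$ with $P$ a non-abelian Sylow subgroup, use Lemma~\ref{lem1}(c) to force $K = K'$ and hence $K = 1$ (a nontrivial nilpotent group is never perfect), and then conclude by Theorem~\ref{p-groups}. No gap; nothing further is needed.
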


\begin{proof}
Assume that $G$ is non-abelian and let $P$ be a non-abelian Sylow $p$-subgroup of $G$, 
where $p$ is a suitable prime. So $G = P \times K$ and hence $K = 1$ by part (c) of 
Lemma~\ref{lem1}. 
\end{proof}

%%%%

Given a finite group $G$, we denote by $G_{\infty}$ the nilpotent residual of 
$G$, that is the smallest term of the lower central series of $G$.

\begin{lem}
  \label{solvable1}
Let $G \in \nlgc$ be a solvable group and let $K = G_{\infty} > 1$ (i.e. $G$ is not nilpotent).
Let  $N < K$ be such  that either $N = 1$ or  $N$ is the unique minimal normal subgroup of $G$ contained in $K$.  
If $(|G/K|,  |K/N|) = 1$,  
then $G$ is a Frobenius group with Frobenius kernel $K$. 
\end{lem}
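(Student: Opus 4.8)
The plan is to realise $G$ as a Frobenius group with kernel $K$, by first splitting off a complement and then forcing the complement to act fixed-point-freely on $K$, the latter step by a character-theoretic contradiction. I would run the argument by induction on $|G|$.

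First, a complement. Since $K=G_{\infty}$, the quotient $G/K$ is nilpotent and $K\le G'$. Because $(|G/K|,|K/N|)=1$, the subgroup $K/N$ is a normal Hall subgroup of $G/N$, so by Schur--Zassenhaus it has a complement in $G/N$; lifting, one obtains $H\le G$ with $G=HK$ and $H\cap K=N$. When $N=1$ this $H=:L$ is a genuine complement to $K$, and one reduces to this case by passing to $G/N$ (note $(G/N)_{\infty}=K/N$, the coprimality hypothesis is inherited by $G/N$, and by Lemma~\ref{lem1}(d) we stay inside $\nlgc$). The point requiring care is that the ``unique minimal normal subgroup'' hypothesis may fail in $G/N$; so when $N\ne1$ one does not simply invoke the inductive statement, but carries $N$ along and uses it directly in the next step.

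Next, the complement acts fixed-point-freely. Recall that $G=KL$ is a Frobenius group with kernel $K$ exactly when $\cent{G}{x}\le K$ for every $1\ne x\in K$, equivalently $\cent{K}{l}=1$ for every $1\ne l\in L$. Suppose not; then some $g\in G\setminus K$ centralises some $1\ne x\in K$, and one may arrange that $gK$ has prime order $p$ (dividing $|G/K|$) in $G/K$. Observe that $M:=\gen{\cent{K}{g}^{\,G}}$ is a non-trivial normal subgroup of $G$ contained in $K$, hence $M\supseteq N$ when $N\ne1$. Using Lemma~\ref{lem1}(d) one passes to a quotient of $G$ in which a $G$-chief factor $V\le K$ is centralised by the image of $g$; coprime-action theory, using $(|G/K|,|K/N|)=1$, is what produces such a chief factor (the factor $N$ itself being a candidate). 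The resulting configuration is modelled on $SL_2(3)=Q_8\rtimes C_3$: one would produce a normal subgroup $N_0\le K$ of $G$ and a non-principal $\lambda\in\irr{N_0}$ whose inertia group in $G$ contains $\gen{g}$ modulo $K$ and which extends to that inertia group, so that Gallagher's theorem \cite{Is}, applied with the cyclic section of order $p>1$, yields $p$ distinct irreducible characters $\chi_1,\dots,\chi_p$ of $G$ of the same degree and with the same kernel; comparing fields of values --- one of the $\chi_i$ is rational-valued where another is not, so they cannot lie in a single Galois orbit --- contradicts $G\in\nlgc$. When the relevant local quotient happens to be a $p$-group, Corollary~\ref{nilpotent} and Lemma~\ref{fullyramified} are used to pin it down.

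I expect the main obstacle to be the middle passage of the second step: extracting, from the bare existence of a non-trivial fixed point of the complement, an explicit pair $(N_0,\lambda)$ for which the ``rationality defect'' is visible --- arranging simultaneously that the manufactured characters of $G$ are genuinely irreducible (control of the inertia group and of whether $\lambda$ extends, via Clifford theory), that they have equal degree, and that at least two of them have distinct fields of values. The secondary difficulty is the bookkeeping when $N\ne1$: ensuring that the fixed point descends to a fixed point on a $G$-chief factor inside $K$, and that $N$ can play the role of $N_0$; this is precisely what the coprimality and uniqueness hypotheses in the statement are there to supply.
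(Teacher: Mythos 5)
Your proposal has two genuine gaps, and they sit exactly where the lemma's content lies. First, the reduction ``pass to $G/N$ and assume $N=1$'' is not valid: the conclusion for $G/N$ (that it is Frobenius with kernel $K/N$) does not imply the conclusion for $G$. The pair $G={\rm SL}_2(3)$, $K=Q_8$, $N=\zent{K}$ illustrates the failure mode: $G/N\cong \AAA_4$ is Frobenius with kernel $K/N$, the coprimality and uniqueness hypotheses hold, yet $G$ is not Frobenius (of course ${\rm SL}_2(3)\notin\nlgc$, so the lemma is safe, but only because the $\nlgc$ hypothesis must be exploited at the level of $G$ itself, on characters of $K$ that are non-trivial on $N$ --- precisely the information that is thrown away when you pass to $G/N$). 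You half-acknowledge this by saying you will ``carry $N$ along'', but then the burden falls entirely on your second step. Second, that step is not a proof: you explicitly defer the construction of the pair $(N_0,\lambda)$, and nothing in the sketch explains why such a pair exists, why the manufactured characters of $G$ are irreducible of equal degree with equal kernel, or why one of them should be rational and another not --- Gallagher's theorem gives no control whatsoever on fields of values, so the ``rationality defect'' you want is not produced by the argument as described. In short, the fixed-point-free step, which is the whole lemma, is missing.

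For comparison, the paper's proof needs neither your Schur--Zassenhaus step nor any bespoke construction. For an arbitrary non-principal $\phi\in\irr{K}$, the hypotheses (with $N$ abelian, and the uniqueness of $N$ handling the case $N\not\leq K'$) give $(o(\phi)\phi(1),|G/K|)=1$, so $\phi$ has a canonical extension $\alpha$ to $I=I_G(\phi)$ with $o(\alpha)=o(\phi)$ by \cite[(6.28)]{Is}. If $\beta$ is any extension of $\phi$ to $I$, then $\alpha^G$ and $\beta^G$ are non-linear irreducible characters of $G$ of equal degree (non-linear because $K=G_\infty\le G'$), hence Galois conjugate; Clifford correspondence plus the Galois-invariance of the determinantal order forces $\beta=\alpha$. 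Gallagher's theorem --- used in the direction opposite to yours: a unique extension forces $I/K$ to have no non-trivial linear characters --- then gives $(I/K)'=I/K$, so $I=K$ by solvability. Finally Brauer's permutation lemma converts ``$I_G(\phi)=K$ for all non-principal $\phi\in\irr{K}$'' into $\cent{G}{x}\le K$ for all $1\neq x\in K$, i.e., the Frobenius property. Any repair of your outline would essentially have to reconstruct this inertia-group argument (the Galois-invariant that does the work is the determinantal order, not rationality), so I would abandon the quotient reduction and argue directly on $\irr{K}$ as above.
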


\begin{proof}
Let $\phi \in \irr K$ with $\phi \neq 1_K$. Then $\phi(1)$ divides $|K/N|$, as $N$ is a normal
abelian subgroup of $K$; so $\phi(1)$ is coprime to $|G/K|$.

We claim that  the determinantal order $o(\phi)$ is also coprime to $|G/K|$.
Recall that $o(\phi)$ divides $|K/K'|$; so we are done if $N \leq K'$. 
But if $N \not\leq K'$, 
then the assumption that $N$ is the unique minimal normal subgroup of $G$ contained in $K$ 
implies that  $K' = 1$ and hence
$K$ is a $q$-group, for some prime $q$. 
So, $o(\phi)$ is a power of $q$ and  $q$ divides $|K/N|$, and the claim follows.

Thus, $(o(\phi)\phi(1), |G/K|) = 1$  and hence (by~\cite[(6.28)]{Is}) there exists a unique extension 
$\alpha$ of $\phi$ to $I_G(\phi)$ such that $o(\alpha) = o(\phi)$. 

%Next, we show that $I = K$. Assume not, and let $  
Now let  $\beta$ be any extension of $\phi$ to $I = I_G(\phi)$ and let 
$\chi = \alpha^G$ and $\psi = \beta^G$. So, $\chi, \psi \in \irr G$ 
are irreducible  characters of the same degree. 
Observe that they are non-linear,
%: if $\chi(1) = 1$, then $\chi$ extends $\phi$ 
%and $\ker{\chi_K} = \ker{\phi}  \geq G' \cap K = K$ (as $K = G_{\infty}$), against the assumption $\phi \neq 1_K$.
otherwise their kernels would contain $K$, as $K = G_{\infty} \leq G'$, 
while they lie over $\phi \neq 1_K$.
So, as $G \in \nlgc$, there exists a $\sigma \in Gal(\Q(\chi)|\Q)$ such that $\chi^{\sigma} = \psi$. 
Recalling that Galois conjugation commutes with character induction,  Clifford correspondence 
yields that $\alpha^{\sigma} = \beta$.  In particular 
$o(\beta) = o(\alpha) = o(\phi)$ and hence (by the uniqueness of $\alpha$) we conclude that 
$\beta = \alpha$.
Whence, there exists a unique extension of $\phi$ to $I$.  
Now Gallagher's theorem (\cite[6.17]{Is}) yields 
$(I/K)' = I/K$ and, being $I/K$ solvable, this implies $I = K$. 

Therefore, we have shown that $I_G(\phi) = K$ for every $\phi \in \irr K$, $\phi \neq 1_K$. 
By Brauer Permutation Lemma (\cite[6.32]{Is}), 
we conclude that  no non-trivial  conjugacy class of $K$ is fixed by any  non-trivial element of  $G/K$.
Thus, $\cent Gx \leq K$ for every non-trivial element $x \in K$, so $G$ is a Frobenius group with 
kernel $K$.   
\end{proof}

%%%%%%%%%
\begin{pro}
\label{solvable2}      
Let $G$ be a solvable, non-nilpotent group, with $G \in \nlgc$.
Then $G = KL$ is a Frobenius group,  where $K = G_{\infty}$ is the Frobenius
kernel, $L$ is the Frobenius complement and either $L$ is cyclic  or $L \cong Q_8$.  
\end{pro}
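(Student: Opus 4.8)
The plan is to first establish that $G$ is a Frobenius group with kernel $K = G_\infty$, and then to analyze the structure of the complement $L$. For the first part, I would apply Lemma~\ref{solvable1}. To do so, I need to produce a normal subgroup $N < K$ satisfying its hypothesis: either $N = 1$ or $N$ is the unique minimal normal subgroup of $G$ contained in $K$, together with the coprimality condition $(|G/K|, |K/N|) = 1$. The natural candidate is to take $N$ to be a minimal normal subgroup of $G$ contained in $K$ (which exists since $K > 1$ and $G$ is solvable, so $K$ has nontrivial characteristic subgroups like its Fitting subgroup, and in particular an abelian minimal normal subgroup of $G$ lives inside $K$), and to argue that the quotient $G/N$ is still in $\nlgc$ by Lemma~\ref{lem1}(d), so by induction on $|G|$ the quotient $G/N$ is either nilpotent or a Frobenius group of the stated shape. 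The main work here is to bootstrap: if $G/N$ is nilpotent then $K = N$ is minimal normal in $G$, hence elementary abelian, and one checks directly (using that $G/K$ acts faithfully and coprimely, or else that $\nlgc$ forces transitivity of the $\mathcal{G} \times (G/K)$-action on the nonprincipal characters of $K$) that $G$ is Frobenius; if $G/N$ is Frobenius with kernel $K/N$, then one verifies the coprimality hypothesis of Lemma~\ref{solvable1} and applies it to conclude $G$ is Frobenius with kernel $K$.

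Once $G = KL$ is Frobenius, the second part is to show $L$ is cyclic or $L \cong Q_8$. Here I would use the classical structure theory of Frobenius complements: every Sylow subgroup of $L$ is cyclic or (for $p = 2$) generalized quaternion, and $L$ has at most one subgroup of each prime order. So the goal reduces to ruling out all quaternion groups $Q_{2^m}$ with $m \geq 4$, and ruling out the more exotic non-cyclic complements (those with a normal cyclic subgroup of index dividing the solvable residual, e.g. $\mathrm{SL}_2(3)$, $\mathrm{SL}_2(5)$, or groups with $\mathrm{SL}_2(3)$ as a factor). The tool is that $L \cong G/K \in \nlgc$ by Lemma~\ref{lem1}(d): a Frobenius complement that is not cyclic or $Q_8$ must be shown to fail the $\nlgc$ condition. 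For instance $Q_{16}$ has two faithful irreducible characters of degree $2$ that are not Galois conjugate (they have different fields of values, or can be distinguished), and $\mathrm{SL}_2(3)$ likewise has non-Galois-conjugate characters of equal degree; more generally any Frobenius complement with a nontrivial solvable residual or with a quaternion Sylow $2$-subgroup of order $\geq 16$ can be eliminated by exhibiting such a pair, possibly after passing to a suitable quotient to reduce to the small bad cases.

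I expect the main obstacle to be the inductive bootstrapping in the first part, specifically handling the base case where $G/N$ is nilpotent: one must show directly that $G$ with $K = N$ elementary abelian minimal normal in $G$ and $G/K$ nilpotent (and non-trivial) is forced to be Frobenius. The subtlety is that a priori $G/K$ need not act fixed-point-freely on $K$; to force this, I would combine the $\nlgc$ hypothesis with the orbit-counting idea already used in the proof of Lemma~\ref{2.6}: the group $\mathcal{G} \times (G/K)$ acts on the nonprincipal irreducible (= linear, since $K$ is abelian — but here characters of $G$ above them) characters, and transitivity of this action on each degree class together with coprimality of relevant orders pins down the stabilizers $I_G(\phi) = K$, which gives the Frobenius condition via the Brauer permutation lemma exactly as in Lemma~\ref{solvable1}. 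The secondary obstacle is being thorough in the elimination of bad complements $L$ in the second part — one needs a clean uniform argument (reduce to $Q_{16}$ and to $\mathrm{SL}_2(3)$-type groups and compute) rather than an ad hoc case list. The classification of Frobenius complements keeps this finite and manageable.
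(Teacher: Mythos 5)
Your overall skeleton for the first part (induct on $|G|$, reduce to Lemma~\ref{solvable1} via a minimal normal subgroup $N\leq K$) is the same as the paper's, but the base case you flag as ``the main obstacle'' contains a genuine gap that your proposed fix does not close. When $K=G_{\infty}$ is minimal normal (an elementary abelian $q$-group) and $K=G'$, there is no a priori reason why $q$ should not divide $|G/K|$; if it does, the Sylow $q$-subgroup $Q$ of $G$ is normal (as $G/K$ is abelian), $K\leq\zent Q$ by minimality of $K$, and hence $Q\leq I_G(\phi)$ for \emph{every} $\phi\in\irr K$. So no transitivity statement about the $\mathcal{G}\times(G/K)$-action on characters can ``pin down $I_G(\phi)=K$'' --- the inertia groups really are too big in that configuration, and what must be shown is that the configuration itself is impossible. (Likewise $(o(\phi)\phi(1),|G/K|)=1$ fails, so the canonical-extension argument underlying Lemma~\ref{solvable1} is unavailable.) The paper closes this case by a different mechanism: taking a $q$-complement $L$, coprime action gives $Q=[L,Q]\times\cent QL=K\times\cent QL$, hence $G=LK\times\cent QL$, and Lemma~\ref{lem1}(c) forces $\cent QL=1$, i.e.\ $Q=K$; only then does Lemma~\ref{solvable1} apply. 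You need this (or an equivalent) argument. Note that in the complementary subcase $K<G'$ the coprimality is automatic, since $G/K$ is then a non-abelian nilpotent $\nlgc$-group, hence a $p$-group with $p\neq q$ by Corollary~\ref{nilpotent}.

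For the second part you are working much too hard, and your sketch is correspondingly incomplete. Since $K=G_{\infty}$ is the nilpotent residual, $L\cong G/K$ is \emph{nilpotent}, so there is no need to eliminate $\mathrm{SL}_2(3)$, $\mathrm{SL}_2(5)$, metacyclic complements, or any other non-nilpotent Frobenius complement by ad hoc character computations. A nilpotent Frobenius complement is a direct product of cyclic and generalized quaternion Sylow subgroups, and Corollary~\ref{nilpotent} says a nilpotent $\nlgc$-group is either abelian (hence here cyclic) or a $p$-group with commutator subgroup of prime order; since $|Q_{2^n}'|=2^{n-2}$, the only quaternion possibility is $Q_8$. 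This one-line reduction replaces your entire proposed case analysis and is the reason the paper dispatches the complement before starting the induction.
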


\begin{proof}
We first observe that it is enough to show that $G$ is a Frobenius group with kernel $K = G_{\infty}$. 
In fact, $G/K$ is a nilpotent group in $\nlgc$, so Corollary~\ref{nilpotent} and the structure of 
Frobenius complements then yield that $G/K$ is  either a cyclic group  or 
it is a quaternion group $Q_8$,
since $Q_{2^n}$ has commutator subgroup of order $2^{n-2}$.     

We work by induction on $|G|$. Let $K = G_{\infty}$.
Assume first that $K$ is minimal normal in $G$; hence $K$ is an abelian $q$-group for some prime $q$. 

If $K < G'$, then   $G/K \in \nlgc$ is a  non-abelian nilpotent group and 
by Corollary~\ref{nilpotent} $G/K$ is a $p$-group, for a prime $p \neq q$.
Hence, we are done by Lemma~\ref{solvable1} (with $N = 1$).
If $K = G'$, let $Q$ be a Sylow $q$-subgroup of $G$.  
So $K \leq Q$ and $Q$ is normal in $G$. 
Thus, as $1 \neq \zent Q \cap K \nor G$, we see that $K \leq \zent Q$.
Let $L$ be a $q$-complement of $G$. 
Since $[L,Q]$ is a subgroup of $K = G'$, $[L, Q] \nor G$. So $[L, Q] \neq 1$, as otherwise
$G = L \times Q$ would be nilpotent,  and we deduce  that 
$[L, Q] = K$. By coprime action, $Q = [L, Q] \times \cent QL = K \times \cent QL$ and hence $G = LK \times \cent QL$.
Recalling that $LK$ is non-abelian,  
Lemma~\ref{lem1} (c) yields that $\cent QL = 1$.
Therefore, $Q = K$ and we can again apply Lemma~\ref{solvable1}
(with $N = 1$). 

Thus, we can assume that $K$ is not minimal normal in $G$. 
If $N_1, N_2 \leq K$ are distinct minimal normal subgroup of $G$, then by induction 
$G/N_i$ is a Frobenius group with Frobenius kernel $K/N_i$, for $i = 1,2$, 
as $K/N_i = (G/N_i)_{\infty}$. 
In particular, $(|G/K|, |K/N_i|) = 1$ for $i = 1,2$ and hence  $(|G/K|, |K|) = 1$; again, we 
conclude using Lemma~\ref{solvable1}.
So, we can reduce to the case that there is an unique minimal normal subgroup $N$ of $G$ such that 
$N < K$. We conclude by using induction and Lemma~\ref{solvable1}. 
\end{proof}

%%%%%%%%%%%%%%%%%%5

We now start  working towards a finer description of the solvable (non-nilpotent) 
$\nlgc$-groups. 
In order to motivate the next result, we mention that the affine group 
${\rm GF}(5^2)^+ \rtimes {\rm GF}(5^2)^{\times}$ is a $\nlgc$-group, but its 
subgroup of index $2$ is not a $\nlgc$-group, since it has two \emph{rational} characters of 
degree $12$. 

Given an abelian group $K$, we denote by  $\irr{K}^{\#}$ the set of 
non-principal irreducible characters of $K$. 

\begin{thm}
  \label{AbelianKernel}
Let $G = KL$ be a Frobenius group, with Frobenius kernel $K$  and  complement $L$.
Assume that $K$ is abelian.  
%If $G \in \nlgc$ then $K$ is minimal normal in $G$.
Then $G \in \nlgc$ if and only if $K$ is minimal normal in $G$, $|K| = q^n$ (for a prime $q$) and 
\begin{description}
\item[(a)] either $G \cong (C_3 \times C_3) \rtimes Q_8$ or  
\item[(b)] $L$ is cyclic and $|L| = (q^n -1)/d$, where $d$ is a divisor 
of $q-1$ and $d$ is coprime to $n$.
% setting $k = (q-1)_{\pi'}$, where $\pi = \pi(n)$, 
%$$ \frac{q^n -1}{k} \left| |L| \right| q^n -1\; . $$
\end{description}
\end{thm}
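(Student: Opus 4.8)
The plan is to prove the two directions separately, reducing throughout to the case where $K$ is minimal normal in $G$ and exploiting Lemma~\ref{3.10} to identify $L$ with a subgroup of ${\rm GF}(q^n)^\times$ acting on $K = {\rm GF}(q^n)^+$.

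For the ``only if'' direction, suppose $G = KL \in \nlgc$. First I would show $K$ is minimal normal in $G$: if $1 \neq N < K$ were a proper nontrivial normal subgroup of $G$ contained in $K$, then $K/N$ is a nontrivial abelian $q$-group ($K$ being a $q$-group by Proposition~\ref{solvable2}), and since $G/N$ is again a Frobenius group in $\nlgc$ with kernel $K/N$ (by Lemma~\ref{lem1}(d) and inspection of the structure), induction on $|G|$ applies to $G/N$; combined with the action of $L$ on $N$, one forces $K$ itself to be elementary abelian and irreducible. The cleanest route is: apply Lemma~\ref{2.6}(b) to $N \leq K$ with $\exp K = q$ — but this requires first establishing $\exp K = q$. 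To get that, note that if $\Phi(K) \neq 1$ then $K/\Phi(K)$ is a smaller faithful module, $G/\Phi(K) \in \nlgc$ is Frobenius with elementary abelian kernel, and one derives a contradiction by comparing a fully ramified character over a character of $\Phi(K)$ (nonlinear, with kernel meeting $K$ nontrivially) against a character inflated from $G/\Phi(K)$ of the same degree but different kernel, using that Galois conjugates share kernels (Lemma~\ref{lem1}(b)); this is essentially the argument template already used in the $p$-group proof. Once $K$ is elementary abelian and minimal normal, Lemma~\ref{3.10} gives $L$ cyclic, $|L| = (q^n-1)/d$ for some $d \mid q^n - 1$, unless we are in one of the small exceptional configurations where $L$ is not forced to be cyclic — these must be tracked by hand (the Frobenius complement $L$ embeds in ${\rm GL}_1(q^n)$ only after passing to $\Aut({\rm GF}(q^n))$-twisted actions; the genuinely non-cyclic possibility, $L \cong Q_8$, survives only for $q^n = 3^2$, giving case (a)).

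The main work, and the step I expect to be the real obstacle, is the arithmetic constraint on $d$ in case (b): showing $d \mid q-1$ and $(d,n) = 1$. Here $L = \langle \omega^d \rangle$ where $\omega$ is a generator of ${\rm GF}(q^n)^\times$. The nonlinear irreducible characters of $G$ are the induced characters $\lambda^G$ for $\lambda \in \irr K^\#$, each of degree $|L|$, and two of them $\lambda^G, \mu^G$ are equal iff $\mu \in \lambda^L$ (Clifford theory), while they are Galois conjugate iff $\mu$ lies in the $\mathcal G \times L$-orbit of $\lambda$, where $\mathcal G = \Gal(\Q_q|\Q) \cong {\rm GF}(q)^\times$ acts on $\irr K^\# \cong {\rm GF}(q^n)^\times$ (identifying $\lambda \leftrightarrow$ a nonzero vector via the trace form) by scalar multiplication from ${\rm GF}(q)^\times$. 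So $G \in \nlgc$ iff $\mathcal G \times L$ acts transitively on $\irr K^\#$, i.e. ${\rm GF}(q)^\times \cdot \langle \omega^d\rangle = {\rm GF}(q^n)^\times$. Now $[{\rm GF}(q^n)^\times : \langle \omega^d \rangle] = d$ and ${\rm GF}(q)^\times$ is the unique subgroup of order $q - 1$; the product has index $d / \gcd(d, q-1)$ in ${\rm GF}(q^n)^\times$. Wait — more carefully: $|{\rm GF}(q)^\times \langle\omega^d\rangle| = (q-1)|L| / |{\rm GF}(q)^\times \cap L|$, and ${\rm GF}(q)^\times \cap \langle\omega^d\rangle$ is the subgroup of order $\gcd(q-1, (q^n-1)/d)$. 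Transitivity forces this product to be everything, which after simplification gives exactly $d \mid q - 1$. Finally $(d,n) = 1$: this must come from the requirement that $L$ act \emph{fixed-point-freely} on $K$, i.e. that $KL$ is genuinely a Frobenius group — $\langle \omega^d\rangle$ contains a nontrivial element of ${\rm GF}(q^m)^\times$ for $m \mid n$, $m < n$, precisely when $d$ fails to be coprime to $n$ in the relevant sense; pinning the exact divisibility $(d,n)=1$ down will require checking when $(q^m - 1) \mid (q^n-1)/d$ has no nontrivial solution, using $\gcd(q^n-1, q^m-1) = q^{\gcd(m,n)}-1$.

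For the ``if'' direction, one reverses the computation: given $K$ elementary abelian of order $q^n$ minimal normal in $G$ with $L$ cyclic of order $(q^n-1)/d$, $d \mid q-1$, $(d,n)=1$ — or $G \cong (C_3\times C_3)\rtimes Q_8$ — the linear characters of $G$ are those of $G/K \cong L$, all linear, and every nonlinear $\chi \in \irr G$ has the form $\lambda^G$ of degree $|L|$ with $K \not\leq \ker\chi$; so all nonlinear irreducibles share the single degree $|L|$ (in case (a), $Q_8$ acting on $C_3 \times C_3$ has the eight nonzero vectors in one orbit, all characters of degree $8$, directly checked). Transitivity of $\mathcal G \times L$ on $\irr K^\#$ — equivalently ${\rm GF}(q)^\times \langle \omega^d\rangle = {\rm GF}(q^n)^\times$, which holds since $d \mid q-1$ — then gives that any two nonlinear irreducibles are related by some $\alpha \in \mathcal G$ followed by conjugation, and since conjugate characters are trivially ``Galois conjugate'' is not quite right — rather, $\lambda^G$ and $(\lambda^\alpha)^G = (\lambda^G)^\alpha$ coincide exactly when the Clifford orbits match, and transitivity guarantees for any target $\mu^G$ some $\alpha$ with $(\lambda^\alpha)^G = \mu^G$, i.e. $(\lambda^G)^\alpha = \mu^G$. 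Hence $G \in \nlgc$. The case (a) verification and the $(d,n)=1$ bookkeeping are the only genuinely fiddly parts; everything else is the Clifford-theoretic dictionary plus the finite-field lemma.
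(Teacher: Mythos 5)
Your overall architecture is sound and close to the paper's: reduce to $K$ elementary abelian and irreducible, embed $L$ in $L_0={\rm GF}(q^n)^{\times}$, identify the Galois action of ${\rm Gal}(\Q_q|\Q)$ on $\irr K^{\#}$ with scalar multiplication by $U_0={\rm GF}(q)^{\times}$, and translate $G\in\nlgc$ into transitivity of $U_0L$ on $L_0$, i.e. $U_0L=L_0$. (The paper proves the ``only if'' half of that last equivalence differently, via a field-of-values estimate showing $\Q(\chi)$ lies in the fixed field of a subgroup of index $[U_0:U]$ so that $d$ divides $[U_0:U]$; your direct Clifford-theoretic route is also valid.) But there is a genuine error at the step you yourself flag as the crux. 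The condition $(d,n)=1$ does \emph{not} come from fixed-point-freeness of $L$ on $K$: every nontrivial element of ${\rm GF}(q^n)^{\times}$ acts on ${\rm GF}(q^n)^{+}$ without nonzero fixed points, so that requirement imposes nothing beyond $|L|\mid q^n-1$, and your proposed check via $\gcd(q^n-1,q^m-1)=q^{\gcd(m,n)}-1$ will never produce the constraint. The correct source is the transitivity condition itself: in the cyclic group $L_0$, $U_0L=L_0$ if and only if the indices $[L_0:U_0]=(q^n-1)/(q-1)$ and $[L_0:L]=d$ are coprime; since $d\mid q^n-1$ this coprimality forces $d\mid q-1$ (Euclid), and then $(q^n-1)/(q-1)=1+q+\cdots+q^{n-1}\equiv n \pmod{q-1}$ gives $\bigl(d,(q^n-1)/(q-1)\bigr)=(d,n)$. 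So transitivity is equivalent to $d\mid q-1$ \emph{and} $(d,n)=1$ simultaneously; your simplification stopped at the first half and then hunted for the second in the wrong place.

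Two smaller gaps. First, the $Q_8$ case: the arithmetic $(q^n-1)/(q-1)\mid 8\mid q^n-1$ leaves both $q^n=3^2$ and $q^n=7^2$, and eliminating $7^2$ requires an actual argument (e.g.\ $(C_7\times C_7)\rtimes Q_8$ has six irreducible characters of degree $8$, all real because $\zent{Q_8}$ acts as $-1$, so their fields of values have degree at most $3$ over $\Q$ and they cannot form a single Galois orbit); you assert the outcome without this. Second, your reduction to $K$ elementary abelian invokes ``a fully ramified character over a character of $\Phi(K)$'' --- but $K$ is abelian, so all its irreducible characters are linear and nothing is fully ramified; the clean argument is that Galois conjugacy preserves kernels, so $L$ must permute transitively the kernels of the non-principal $\lambda\in\irr K$, forcing all cyclic quotients of $K$ to have the same order and $K$ to be an irreducible $L$-module.
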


\begin{proof}
  Assume $G \in \nlgc$. Let $\phi, \psi \in \irr K$ be non-principal characters; then 
$\phi^G$ and $\psi^G$ are irreducible characters and they have the same degree $|L|$, as
 $K$ is abelian.  Then there exists a Galois automorphism $\alpha$ such that 
$\psi^G = (\phi^G)^{\alpha} = (\phi^{\alpha})^G$ and hence, by Clifford theory, there is an
element $x \in L$ such that $\phi^{\alpha} = \psi^x$. 
In particular, $\ker{\phi} = \ker{\phi^{\alpha}} = \ker{\psi}^x$. 
As every subgroup $N < K$ with cyclic factor group $K/N$, is the kernel of a suitable 
non-principal irreducible character of $K$, it follows that $L$ acts transitively on the 
set of such subgroups. Therefore, $K$ is elementary abelian, as otherwise it has non-trivial cyclic 
factor groups of distinct orders. Also $K$ is an irreducible $L$-module,  because given a
proper non-trivial $L$-submodule $H$ of $K$ there exist $M_1$, $M_2$ maximal subgroups of $K$ 
such that $H \leq M_1$ and $H \not\leq M_2$.    
Hence, $K$ is minimal normal in $G$. Write $|K| = q^n$, where $q$ is a prime.  
 
Viewing $K$ as a ${\rm GF}(q)$-vector space,   $L$ acts transitively on the hyperplanes of $K$ 
so $(q^n-1)/(q-1)$ divides $|L|$. 
Also, as $L$ acts fixed point freely on $K$, we have 
that $|L|$ divides $q^n -1$.

So, if $L \cong Q_8$, then $(q, n) \in \{(3, 2), (7, 2)\}$. 
Assuming $q= 7$, $n = 2$, then $G \cong (C_7 \times C_7) \rtimes Q_8$  
has six irreducible characters of degree $8$, and their fields of values have degree $3$ 
over $\Q$, so $G \not\in \nlgc$. 
Hence, if $L \cong Q_8$, then $G \cong (C_3 \times C_3) \rtimes Q_8$. 
We notice here that  the group $(C_3 \times C_3) \rtimes Q_8$ has exactly two non-linear
irreducible characters, one of degree $2$ and one of degree $8$, so it is a $\nlgc$-group. 

By Proposition~\ref{solvable2}, we can  now assume that $L$ is cyclic. 
So, by Lemma~\ref{3.10} we can assume that $L$ is a subgroup of the multiplicative group $L_0$ of the field 
${\rm GF}(q^n)$ acting on $K = {\rm GF}(q^n)^{+}$. 
We denote by $U_0$ the subgroup of order $q-1$ of $L_0$ and set $U = U_0 \cap L$. 
Write $d = [L_0:L]$, where $d$ divides $q-1$. 

We first show  that $L_0 = LU_0$ if and only if $(d, (q^n-1)/(q-1)) = 1$. 
In fact, assuming $L_0 = LU_0$, we have that $[U_0:U] = [L_0:L] = d$ and hence $|U| = (q-1)/d$.
Since $L_0$ is cyclic, we also have that $|U| = (|U_0|, |L|) = |U|((q-1)/|U|, [L:U])$. 
Hence, $(d, (q^n-1)/(q-1)) = ((q-1)/|U|, [L:U]) = 1$. 
Conversely, as  $[L_0:L] = d$ and $[L_0:U_0] = (q^n -1)/(q-1)$, if $(d, (q^n-1)/(q-1))=1$, 
then $U_0$ and $L$ are subgroups of coprime indices in $L_0$ and hence $L_0 = LU_0$. 

So, observing that (as $d$ is a divisor of $q-1$) $(d, (q^n -1)/(q-1)) = (d, n)$, 
in order to complete the proof of the theorem it is enough to show that $G \in \nlgc$ if and only if 
$L_0 = LU_0$. 
%We consider the conjugation action of $L_0$ on $\irr K^{\#}$. 

We  notice that, identifying $U_0$ with the group of non-zero residue classes $\o a$ $\mod q$, 
if $\o a \in U_0$ and $\lambda \in \irr K^{\#}$, then $\lambda^{\o a} = \lambda^{\alpha_a}$,
where $\alpha_a \in \mathcal{G}_q = {\rm Gal}(\Q_q|\Q)$ is the Galois automorphism that takes $q$-th roots of
unity to their $b$-th power, where  $b$ is the inverse of $a$
$\mod q$. 

Assume first that $L_0 = LU_0$ and take non-linear characters $\chi, \psi \in \irr G$. 
Then  $\chi = \lambda^G$ and $\psi = \mu^G$ for some $\lambda, \mu \in \irr K^{\#}$. 
As $L_0 = LU_0$ acts transitively on $\irr K^{\#}$, there exists a Galois automorphism  
$\alpha \in \mathcal{G}_q $ and an element $x \in L$ such that
$\lambda = (\mu^x)^{\alpha}$. 
Hence, 
$$\chi = \lambda^G = ((\mu^x)^{\alpha})^G = ((\mu^x)^G)^{\alpha} = \psi^{\alpha}$$
because $(\mu^x)^G = \mu^G$ as $x \in L$.
So $G$ is a $\nlgc$-group.

Assume now that $G \in \nlgc$.   
Note that $G$ has exactly $d$ non-linear
irreducible characters, all of degree $|L|$, because $|\irr{K}^{\#}|/|L| = d$ 
and $G = KL$ is a Frobenius group. 
Hence, considering a non-linear $\chi \in \irr G$, 
we have that $d$ divides $|{\rm Gal}(\Q(\chi)|\Q)| = [\Q(\chi):\Q]$. 
Writing $\chi = \lambda^G$ for a suitable $\lambda \in \irr K^{\#}$, one observes that
$\chi(g) = 0$ for every $g \in G \setminus K$. 
For $x \in K$, taking a transversal $T$ of $U$ in $L$, one has 
$$\chi(x) = \sum_{y \in L} \lambda^y(x) =   
\sum_{t \in T} \sum_{a \in U} \lambda^{at}(x) =   
\sum_{t \in T} \sum_{a \in U} \lambda^a(x^{t^{-1}}) =   
\sum_{t \in T} \sum_{\alpha \in \mathcal{H} } (\lambda(x^{t^{-1}}))^{\alpha}$$
where $\mathcal{H}$ is a subgroup of $\mathcal{G}_q$ such that 
$[\mathcal{G}_q :\mathcal{H}] = [U_0:U]$.  
Hence $\sum_{\alpha \in \mathcal{H} } (\lambda(x^{t^{-1}}))^{\alpha}$
 is an element of $E = \rm{Fix}(\mathcal{H})$. 
We conclude that $\chi(x)\in E$ for every $x \in K$ and then $\Q(\chi) \subseteq E$. 
Therefore,  $d$ divides $[E:Q] = [U_0:U] = [LU_0:L]$ and hence $L_0 = LU_0$. 
\end{proof}

A non-abelian $2$-group $K$ is a \emph{Suzuki $2$-group} if
$K$ has more than one involution and there exists a soluble group of automorphisms of $K$ 
which is transitive on the set of the involutions of $K$ (see~\cite[Definition 7.1]{HuBl}). 
If $K$ is a Suzuki $2$-group, then 
$K' = \zent K = \Phi(K) = \{ x\in K \mid x^2 = 1\}$ and
either $|K| = |K'|^2$ or $|K| = |K'|^3$ (\cite[Theorem 7.9]{HuBl}).  

\begin{thm}
  \label{solvable5}
Let $G = KL \in \nlgc$ be a solvable Frobenius group with kernel $K = G_{\infty}$ and complement $L$. 
Then either $K$ is an elementary abelian $q$-group, $q$ a prime,  or $K$ is a Suzuki $2$-group such that $|K| = |K'|^2$, $L$ is cyclic and $|L| = |K'|-1$.    
\end{thm}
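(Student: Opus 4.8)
\medskip

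The plan is to use Proposition~\ref{solvable2} to reduce to the case $L$ cyclic or $L \cong Q_8$, to dispose of the $Q_8$ case quickly, and then to show that if $K$ is non-abelian and $L$ is cyclic then $K$ must be a Suzuki $2$-group of the stated type. First I would set up notation: write $K = G_\infty$, let $N = \Phi(K)K' \cdots$ — more precisely, I would focus on $K' $ and the Frattini quotient $K/\Phi(K)$. The key structural input is that $L$ acts coprimely and fixed-point-freely on $K$, and by Lemma~\ref{lem1}(d) every proper quotient $G/M$ (with $M \nor G$, $M \le K$) is again a $\nlgc$-group, which by induction on $|G|$ and Proposition~\ref{solvable2} is again Frobenius with kernel $K/M$; so $K/M$ is of the asserted form. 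I would first argue that $K$ can be assumed to have a unique minimal normal subgroup $N$ of $G$: if there were two, say $N_1 \neq N_2$, then $K/N_1$ and $K/N_2$ are of the stated form, and (since a Suzuki $2$-group has a unique minimal $G$-invariant subgroup, namely $K' = \zent K$, when $|K|=|K'|^2 > 4$) comparing the two quotients forces $K$ itself to be elementary abelian or to collapse — this is the kind of diagram-chase that Lemma~\ref{solvable1} is designed to feed into. So reduce to: $N$ is the unique minimal normal subgroup of $G$ contained in $K$, $N$ is an elementary abelian $r$-group for a prime $r$, and $N \le \zent K$ (as $N \cap \zent K$ is $G$-invariant and nontrivial). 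If $N = K$ we are in the elementary abelian case, done; so assume $N < K$ and $K$ non-abelian.

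\medskip

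The next step is to pin down $r$ and the relation between $N$, $K'$ and $\Phi(K)$. Since $K/N$ is a proper $\nlgc$-quotient kernel, it is either elementary abelian or Suzuki; in either case $(K/N)' \le \zent{K/N}$ and $K/N$ has exponent dividing $r$ in the abelian case or $|K'|$ in the Suzuki case. Because $K' \neq 1$ (K non-abelian) and $N$ is the unique minimal normal subgroup of $G$ inside $K$, we get $N \le K'$, hence $K/N$ is \emph{not} elementary abelian, hence $K/N$ is a Suzuki $2$-group and $r = 2$; moreover $\exp K \mid 4$ type considerations and $(K/N)' = K'/N = \zent{K/N} = \zent K/N$ give $K' = \zent K = \Phi(K)$, i.e. $K$ has exactly the Suzuki-group internal structure modulo checking that $K$ has more than one involution (which follows since $|K/K'| = |(K/N)/(K/N)'| \ge 4$ and $\Phi(K) = K'$, so $K/K'$ elementary abelian of rank $\ge 2$, and squares/commutators land in $K' = \zent K$ of exponent $2$). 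At this point I would invoke Lemma~\ref{2.6}(b): $K/K'$ is abelian and, if $\exp K = 2$ were the case $K$ would be abelian, so actually $\exp K = 4$; but Lemma~\ref{2.6}(b) needs $\exp K = q = 2$, which fails — so instead I would use Lemma~\ref{2.6}(a), which applies since $q = 2$: for any $G$-invariant $\lambda \in \irr N$ non-principal and any $\theta_1,\theta_2 \in \irr{K|\lambda}$ of equal degree there is a Galois automorphism carrying $\theta_1$ to $\theta_2$. Applying this with $N = K' = \zent K$ and $\lambda$ the (unique, $G$-invariant since $\zent K$ has a unique nontrivial character up to... ) faithful character of $\zent K$: all $\theta \in \irr{K|\lambda}$ have the same degree $\sqrt{|K:\zent K|}$ (by the fully-ramified structure of Suzuki $2$-groups, cf. Lemma~\ref{fullyramified}-type reasoning applied via \cite{HuBl}), and are permuted transitively by $\Gal(\Q_{2^k}|\Q) \times L$. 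Counting — $|\irr{K|\lambda}| = |K:\zent K|^{1/2}$ times the number of such $\lambda$, balanced against the order of the acting group — will force $|K| = |K'|^2$ and $|L| = |K'| - 1$.

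\medskip

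For the $Q_8$ case: if $L \cong Q_8$ and $K$ is non-abelian, then $K/\Phi(K)$ is a nontrivial $\F_2[Q_8]$-module on which $Q_8$ acts fixed-point-freely, hence faithfully through an irreducible $2$-dimensional representation; but $Q_8$ acting fixed-point-freely on a $2$-group forces (by standard Frobenius-complement arithmetic) $|K|$ a power of an odd prime — contradiction with $r = 2$. So in fact the non-abelian case only occurs with $L$ cyclic, which is consistent with the statement. I would phrase this as: $K$ non-abelian $\Rightarrow$ $r = 2$ $\Rightarrow$ $L$ has no subgroup isomorphic to $Q_8$ acting on a $2$-group fixed-point-freely, so $L$ cyclic by Proposition~\ref{solvable2}.

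\medskip

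The main obstacle I anticipate is the final counting step: extracting $|K| = |K'|^2$ (rather than $|K'|^3$, the other option allowed by \cite[Theorem 7.9]{HuBl}) and $|L| = |K'| - 1$ exactly. The transitivity from Lemma~\ref{2.6}(a) gives that $\Gal \times L$ acts transitively on a set of size (number of nonprincipal faithful $\lambda$ of $\zent K$) $\times \sqrt{|K:\zent K|}$; since $\zent K = K'$ is elementary abelian, $\lambda$ ranges over $|K'| - 1$ characters and $\Gal(\Q_2|\Q)$ is trivial (the values live in $\Q_2 = \Q$), so transitivity of $L$ alone on the $(|K'|-1)\sqrt{|K:K'|}$ characters, together with $|L| \mid |K'| - 1$ (fixed-point-free action on $K$, so $|L| \mid |K| - 1$ and more sharply $|L| \mid |K'|-1$ via the action on $K' = \zent K$, since $L$ fixed-point-freely on $K$ restricts to a fixed-point-free action on $\zent K$, but one must check $L$ doesn't centralize $\zent K$), forces $\sqrt{|K:K'|} = 1$?? — no: it forces $|K'| - 1 \ge (|K'|-1)\sqrt{|K:K'|}/|L| \ge \sqrt{|K:K'|}$, giving $|K:K'| \le (|K'|-1)^2 < |K'|^2$, which combined with the Suzuki dichotomy $|K:K'| \in \{|K'|, |K'|^2\}$ yields $|K:K'| = |K'|$, i.e. $|K| = |K'|^2$, and then equality in the counting forces $|L| = |K'| - 1$. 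Making each inequality in this chain precise — in particular verifying that $L$ acts nontrivially (hence fixed-point-freely) on $\zent K = K'$ and that the $\theta$'s above a fixed $\lambda$ are not further fused by anything outside $L$ — is the delicate part, and I would handle it carefully using that a Frobenius complement of a $2$-group kernel must act nontrivially on $K/\Phi(K)$ and that $\Phi(K) = K' = \zent K$.
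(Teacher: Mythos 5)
Your top-level skeleton (reduce via Proposition~\ref{solvable2}, settle the abelian case by Theorem~\ref{AbelianKernel}, then analyse a non-abelian kernel) matches the paper, but the core of the non-abelian case does not go through. The pivotal error is the inference ``$N\le K'$, hence $K/N$ is not elementary abelian, hence $K/N$ is a Suzuki $2$-group.'' From $N\le K'$ and $K/N$ elementary abelian you only get $K'=N$, which is perfectly possible --- indeed it is exactly what happens in the groups that survive: for a Suzuki $2$-group with $|K|=|K'|^2$, the subgroup $K'=\zent K$ \emph{is} the unique minimal normal subgroup of $G$ inside $K$, and $K/K'$ is elementary abelian. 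By ruling this case out you build the rest of the argument on the configuration ($1<N<K'$ with $K/N$ a proper Suzuki quotient) that the paper spends its longest and most delicate stretch \emph{eliminating}, via \cite[Lemma 12.6]{MW}, Lemma~\ref{2.6}, Clifford theory and the second orthogonality relation. That reduction --- proving $K'$ is minimal normal in $G$ --- is the hard step of the theorem and is absent from your sketch; your ``unique minimal normal subgroup'' preamble does not address it, since the obstruction is a chain $1<N<K'$, not two distinct minimal normal subgroups.

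Two further gaps. First, you never actually prove $q=2$: since your route to $r=2$ rests on the faulty ``$K/N$ is Suzuki'' step, you have no argument excluding a non-abelian kernel of odd prime-power order. The paper does this by showing $\exp K=q^2$ (killing $\exp K=q$ with Lemma~\ref{2.6}(b) and second orthogonality) and then, for odd $q$, using regularity of $K$, $K'=\Omega_1(K)$, transitivity of $L$ on the subgroups of order $q$, and Shult's theorem \cite{S}; nothing in your proposal substitutes for Shult's theorem. Second, the concluding count is off: $\Gal(\Q_2|\Q)$ being trivial is irrelevant because $\exp K=4$, so the relevant characters take values in $\Q_4\neq\Q$; a fully ramified $\lambda$ has exactly \emph{one} character above it, not $\sqrt{|K:\zent K|}$ of them; and your inequality chain, done correctly, would give $|K:K'|\le 4$, which is absurd. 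The paper obtains $|K|=|K'|^2$ with no counting at all: once $K'$ is minimal normal, both $K'$ and $K/K'$ are faithful irreducible $L$-modules in characteristic $2$, so Lemma~\ref{3.10} forces $|K'|=|K/K'|$, and $|L|=2^n-1$ comes from Theorem~\ref{AbelianKernel} applied to $G/K'$ (since $d\mid q-1=1$). The Suzuki property then follows from transitivity of $L$ on the involutions, all of which lie in $\zent K$ because every element outside $\zent K$ has order $4$.
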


\begin{proof}
By Proposition~\ref{solvable2} we know that  $L$ is  cyclic or  $L \cong Q_8$.
So, if  $K$ is abelian, we conclude by applying  Theorem~\ref{AbelianKernel}.

We assume now that $K$ is non-abelian. Thus $L$ is cyclic (as Frobenius groups with 
complements of even order have abelian kernel). 
By applying Theorem~\ref{AbelianKernel} to the $\nlgc$-group $G/K'$, 
we get that $K/K'$ is an irreducible $L$-module; write
$|K/K'| = q^n$, for a prime $q$. 
Since $K$ is nilpotent, this implies  that $K$ is a $q$-group. 
Also, $|L| = (q^n-1)/d$, where $d$ divides $q-1$ and $(d,n)=1$.

We will first show that $q=2$. 
By taking a suitable factor group, we can assume that $K'$ is minimal normal in $G$. 
Hence,  $\exp{K} \in \{q, q^2\}$ and $K' = \zent K$. Moreover, by Lemma~\ref{3.10} we have that 
$|K'| = |K/K'|$.
If $\exp{K} = q$, then by Lemma~\ref{2.6} we see that every non-principal irreducible character of $K'$ is 
fully ramified in $K/K'$. Hence, by the second orthogonality relation, for any
$x \in K \setminus K'$ we have  
$|K/K'| =  |\cent{K/K'}{xK'}| = |\cent K x| \geq |\langle x, K' \rangle| > |K'|$,
a contradiction. 
So, $\exp{K} = q^2$. 

Assume, working by contradiction, that $q \neq 2$. So $K$ (having class $2$) is a regular $q$-group
and $K' = \Omega_1(K) = \{ x \in K \mid x^q = 1 \}$. 
Looking at the action of $L$ on $K'$, by Lemma~\ref{3.10} we can identify  $K'$
with the additive group of the field $\mathbb{F} = {\rm GF}(q^n)$ and $L$ 
with a subgroup of $M = \mathbb{F}^{\times}$. Let $U = {\rm GF}(q)^{\times} \leq M$. 
As $|L| = (q^n-1)/d$, with $d$ a divisor of $q-1$ and $d$ coprime to $n$, 
we have that $LU = M$; in fact, $(|M:L|, |M:U|) = (d, (q^n-1)/(q-1)) = (q, n) = 1$. 
It follows that $L$ acts transitively on the subgroups of order $q$ of 
$K'$.
As all elements in $K \setminus K'$ have order $q^2$, we conclude  $L$ acts transitively on the subgroups of order $q$ of $K$ and 
hence  Shult's theorem~\cite{S} yields that $K$ is abelian, a contradiction. 
Hence, $q=2$ and  then $|L| = 2^n -1$.

We are going to show  that   $K'$ is   minimal normal in $G$.
Assume,working by contradiction, that  there exists a non-trivial subgroup $N$ 
of $K'$ such that $K'/N$ is a chief factor of $G$. 
Taking a suitable factor group, we can also assume that $N$ is minimal normal in $G$.  So, $N \leq \zent K$ and hence $N$ is an irreducible $L$-module. 
By Lemma~\ref{3.10}, then   
$K/K'$, $K'/N$ and $N$ are all faithful irreducible
$L$-modules of the same order  $2^n$, for a positive integer $n$ .
Moreover, $K'$ is abelian, as $[K, K', K] = [K', K, K] = 1$ implies $[K', K'] = 1$ by the Three Subgroups Lemma. 
Finally, we note that by induction 
$K/N$ is a Suzuki $2$-group with $|K/N| = |K'/N|^2$. 

Now, $\exp{K'} \in \{2, 4\}$. If $\exp{K'} = 4$, then $N \setminus \{ 1 \}$
is the set of all involutions of $K'$ (as $K'/N$ is an irreducible $L$-module).
As in the Suzuki group $K/N$ all elements not belonging to $K'/N$ have order $4$, 
we see that $N \setminus \{ 1 \}$ is the set of all involutions of $K$. 
Observing that $L$ acts transitively on $N \setminus \{ 1 \}$, we conclude that
$K$ is a Suzuki $2$-group. 
So, in particular $|K'| \leq |K|^{1/2}$, a contradiction as $|K'| = |K|^{2/3}$. 
Thus, $K'$ is elementary abelian and then $\exp K = 4$. 

Consider now a non-principal character $\lambda \in \irr N$. 
Let $\mu \in \irr{K'|\lambda}$ and let $T = I_K(\mu)$. 
By~\cite[Lemma 12.6]{MW} there exists a (uniquely determined) subgroup $U = U_{\mu}$, with $K' \leq U \leq T$, 
such that every $\nu \in \irr{U|\mu}$ extends $\mu$ and is fully ramified 
in $T/U$ (so, in particular, $|T/U|$ is a square). 
By Clifford correspondence, it follows that all characters $\theta\in \irr{K|\mu}$ 
have the same degree (depending only on $|U/K'|$) and that 
$|\irr{K|\mu}| = |U/K'|$. 
By Lemma~\ref{2.6} it follows that $|U/K'| \leq 2$.   

For  $\mu_0 \in \irr{K'|\lambda}$, we have 
 $\mu_0 = \mu\epsilon$, for some $\epsilon \in \irr{K'/N}$
and $K'/N$ is central in $K/N$, so  $I_K(\mu_0) = T$. 
Recalling that $|T:U_{\mu_0}|$ is a square and that 
both $|U/K'|$ and $|U_{\mu_0}/K'|$ are at most $2$, we also get that
$|U| = |U_{\mu_0}|$. Hence, all characters $\theta \in \irr{K|\lambda}$ have the same degree. 
So, again using  Lemma~\ref{2.6},  we get that $|\irr{K|\lambda}| \leq 2$.

Observing that $|\irr{K'|\lambda}| = |K'/N| = |K/K'|$, we deduce that  
the number $m$ of orbits in the conjugation action 
of $K/K'$ on $\irr{K'|\lambda}$ is $|T/K'|$.
But, by Clifford theorem, $m \leq |\irr{K|\lambda}|$, so we get that 
$|T/K'|  \leq 2$. 
As $|T:U|$ is a square and $K' \leq U \leq T$, we conclude that   $T = U$. 
Assume $|T/K'| = 2$ and let $\mu_1, \mu_2 \in \irr{K'|\lambda}$ be representatives of
the orbits of $K/K'$ on $\irr{K'|\lambda}$. 
As observed before, $|\irr{K|\mu_i}| = |U/K'| = 2$. 
But by Clifford theorem $\irr{K|\mu_1}  \cap \irr{K|\mu_2} = \emptyset$, and 
$\irr{K|\lambda} =  \irr{K|\mu_1}  \cup \irr{K|\mu_2}$, hence 
$|\irr{K|\lambda}| = 4$, a contradiction.  

It follows that $T = K'$ and hence $\mu^G  = \theta \in \irr K$ for every $\mu \in \irr{K'|\lambda}$.
Since $K/K'$ is transitive on $\irr{K'|\lambda}$, then $|\irr{K|\lambda}| = 1$ and we conclude that every non-principal $\lambda \in \irr N$
is fully ramified with respect to $K/N$. 
Therefore,  we have that 
$\chi(x) = 0$ for all $\chi\in \irr K \setminus \irr{K/N}$ and $x \in K \setminus N$. 
Now, $K/N$ has exponent greater than $2$ (as $K/N$ is non-abelian) and hence there
exists an element $y \in K \setminus N$ such that $x := y^2 \not\in N$. 
Since both $K/K'$ and $K'/N$ are elementary abelian, it follows that $x  \in K'$
and that $y \in K \setminus K'$. 
By the second orthogonality relation, $|\cent K x| = |\cent{K/N}{xN}|$. 
But $|\cent{K/N}{xN}| = |K/N| = |N|^2 = |K'|$, while both $y$ and $K'$ centralize $x$
so $|\cent K x| > |K'|$, a contradiction.     

\medskip
Hence, we have that $K'$ is minimal normal in $G$. So, $K' \leq \zent K$
and both $K/K'$ and $K'$ are irreducible $L$-module. In particular, 
$K' = \zent K$ and $|K'| = |K/K'| = 2^n$.  
Recalling that $|L| = 2^n -1$, we see that  
$L$ acts transitively on the non-identity
elements of both $Z = \zent K$ and $K/Z$. 
As all elements in a coset $xZ$, with $x \in K \setminus Z$,  have the same
order, we deduce that all elements $x \in K \setminus Z$ have order $4$. 
Therefore, $Z \setminus \{1 \}$ is the set of the involutions of $K$. 
As $L$ acts transitively on it,
we conclude that $K$ is a Suzuki $2$-group with $|K| = |K'|^2$. 
The proof is complete. 
\end{proof}

%%%%%%%%%%%%%%%%%%%%%%%%%%%%%%%%%%%%%%%%%

\medskip

As defined in~\cite{DNT} a finite group $G$ is a $\gc$-group 
(or $G \in \gc$, for short)
if every two \emph{non-principal} irreducible characters of $G$ are Galois conjugate whenever
they have the same degree. 
Clearly,  $\gc$ is a subclass of $\nlgc$ and, 
%the two agree on the class of perfect groups.
for a perfect group $G$, $G \in \nlgc$ if and only if $G \in \gc$.   
We are going to show that a non-solvable $\nlgc$-group is perfect,
and then we  apply the classification of non-solvable $\gc$-groups given 
in~\cite{DNT}.

  Let  us consider the following list of simple groups: 
$$\mathcal{S} =  \{ \AAA_5,Sz(8),J_2,J_3,L_3(2),M_{22},Ru,Th,{}^3D_4(2)\} \; .$$

As proved in~\cite{DNT}, all  groups in $\mathcal{S}$
are $\gc$-groups and hence (being perfect) are $\nlgc$-groups. 

We will make use of the following result from~\cite{DNT}:

\begin{thm}[\mbox{\cite[Theorem 4.1]{DNT}}]
\label{DNT4.1}
  Let $S$ be a non-abelian simple group. Then either $G \in \mathcal{S}$ or
for every almost simple group $A$ with socle $S$ there exist two non Galois conjugate characters  $\chi_1, \chi_2 \in \irr A$
such that $\chi_1(1) = \chi_2(1) >1$ and $(\chi_i)_S \in \irr S$ for $i = 1,2$.
\end{thm}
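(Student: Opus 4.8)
The plan is a case analysis based on the classification of finite simple groups, organised according to whether $S$ is an alternating group, a sporadic simple group (including the Tits group $\tw{2}F_4(2)'$), or a simple group of Lie type. The unifying reduction I would use is the following. Suppose $S \notin \mathcal{S}$ and that one can exhibit two characters $\theta_1, \theta_2 \in \irr S$ of the same degree $d > 1$ which are \emph{not} Galois conjugate in $S$ (for instance two distinct rational characters, or two characters with distinct fields of values) and which both \emph{extend} to $\Aut(S)$; say $\tilde\theta_i \in \irr{\Aut(S)}$ with $(\tilde\theta_i)_S = \theta_i$. Then for every almost simple group $A$ with $S \leq A \leq \Aut(S)$ the restrictions $\chi_i := (\tilde\theta_i)_A$ lie in $\irr A$, satisfy $(\chi_i)_S = \theta_i \in \irr S$ and $\chi_i(1) = d > 1$, and they are not Galois conjugate in $A$: if $\chi_1^{\sigma} = \chi_2$ then, since Galois conjugation commutes with restriction, $\theta_1^{\sigma} = \theta_2$, a contradiction. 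So it is enough to produce, for each $S \notin \mathcal{S}$, such a pair $(\theta_1, \theta_2)$ in $S$ itself; conversely the members of $\mathcal{S}$ are exactly the simple groups that admit no such pair, and most of the content of the theorem is the verification that nothing else does.

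For $S = A_n$ I would use the parametrisation of $\irr{A_n}$ by $S_n$-orbits of partitions of $n$: $\chi^{\lambda} \in \irr{S_n}$ is rational, $\chi^{\lambda}{\downarrow}_{A_n}$ is irreducible precisely when $\lambda \neq \lambda'$, and in that case it is extended by the rational character $\chi^{\lambda}$ to $S_n = \Aut(A_n)$ (for $n \neq 6$). Hence it suffices to find non-self-conjugate partitions $\lambda, \mu$ of $n$ with $\mu \notin \{\lambda, \lambda'\}$ and $\chi^{\lambda}(1) = \chi^{\mu}(1) > 1$; an elementary hook-length computation produces such partitions for every $n \geq 7$, and the cases $n = 5, 6$ are settled directly from the character tables --- this is where $A_5 \in \mathcal{S}$ surfaces, while for $A_6$, whose outer automorphism group is $C_2 \times C_2$, one has to treat the three almost simple groups $S_6$, $\mathrm{PGL}_2(9)$, $M_{10}$ individually.

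For the sporadic groups the proof is a finite verification: using that $\Aut(S)/S$ is trivial or cyclic of order $2$, one reads off from the \textsc{Atlas}, for each $S$ not on the list, a pair $(\theta_1, \theta_2)$ with the extendibility and non-conjugacy properties above, and checks that $J_2, J_3, M_{22}, Ru, Th$ admit no such pair (which is precisely why they belong to $\mathcal{S}$). The substantial part of the proof, and the main obstacle, is the uniform treatment of the groups of Lie type. The strategy is the same --- locate two same-degree characters that are not Galois conjugate and both extend to $\Aut(S)$ --- but making it work across all types requires: (i) Lusztig's classification of $\irr S$ together with explicit degree data, namely the Steinberg character, the unipotent characters with their degree polynomials (e.g.\ from L\"ubeck's tables) and the semisimple characters $\chi_s$ indexed by semisimple classes, whose degrees depend only on the type of $C_{G}(s)$; one then picks, say, two unipotent characters of equal degree, or two characters $\chi_s, \chi_{s'}$ with isomorphic centralisers; (ii) control of the fields of values of these characters and of the action of Galois automorphisms on them, via the known description of Galois automorphisms in terms of field and diagonal automorphisms; and (iii) extendibility of the chosen characters to the full automorphism group, for which one invokes the results of Malle and Sp\"ath on extensions of unipotent (and more general) characters. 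For the finitely many groups of small rank or small defining characteristic where these generic arguments fail --- precisely the range containing the remaining members $Sz(8)$, $L_3(2)$, $\tw{3}D_4(2)$ of $\mathcal{S}$ --- one falls back on the explicit generic character tables. Reconciling (ii) and (iii) simultaneously, for every intermediate group $A$, is the technical heart of the argument.
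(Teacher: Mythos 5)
The first thing to say is that the paper does not prove this statement at all: Theorem~\ref{DNT4.1} is quoted verbatim from \cite[Theorem 4.1]{DNT}, and the authors explicitly note in the introduction that this quotation is the point where their classification becomes dependent on the Classification of Finite Simple Groups. So there is no in-paper proof to compare yours against; the only fair comparison is with the proof in \cite{DNT} itself, and your outline does reflect its broad architecture: a CFSG case division into alternating, sporadic and Lie-type groups, aiming in each case to exhibit two same-degree, non Galois conjugate irreducible characters that restrict irreducibly to $S$ from the relevant almost simple overgroups.

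That said, as a proof your text has genuine gaps, because every step carrying actual content is asserted rather than established. Concretely: (i) your unifying reduction requires the pair $\theta_1,\theta_2$ to extend to the full group $\Aut(S)$, which is strictly stronger than what the theorem needs --- the statement is quantified over each almost simple $A$ separately --- and this stronger requirement can fail even when suitable pairs exist for every individual $A$; you notice this for $A_6$, but the same issue recurs throughout the Lie-type case, where $\Out(S)$ need not be cyclic and the argument must be run per almost simple group (or per coset of outer automorphisms), not once at the top. (ii) The claim that a hook-length computation produces, for every $n\ge 7$, two partitions $\lambda,\mu$ with $\lambda\neq\lambda'$, $\mu\notin\{\lambda,\lambda'\}$ and $\chi^{\lambda}(1)=\chi^{\mu}(1)>1$ is plausible but is exactly the combinatorial content of the alternating-group case and is not carried out. (iii) The entire Lie-type analysis --- choosing unipotent or semisimple characters with coinciding degrees, controlling their fields of values under the Galois action, and establishing extendibility --- is precisely what you yourself call the technical heart, and it is left entirely to citations of general machinery without verifying that this machinery delivers the required pairs for every simple group outside $\mathcal{S}$ (and fails to deliver them exactly for the members of $\mathcal{S}$, which is what makes the list correct). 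In short, what you have is a reasonable roadmap of how \cite{DNT} proceeds, not a proof; unless the intention is simply to cite \cite[Theorem 4.1]{DNT}, as the paper does, each of (i)--(iii) must be filled in.
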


The proof of the following result mimics the proof of~\cite[Theorem 3.2]{DNT}; we have
anyway decided to sketch it for completeness. 

\begin{lem}
\label{DNT3.2}
  If $G \in \nlgc$ and $S$ is a non-abelian composition factor of $G$, then 
$S \in \mathcal{S}$. 
\end{lem}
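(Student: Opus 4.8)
The plan is to reduce the statement to Theorem~\ref{DNT4.1} by a standard argument on composition factors, essentially following the scheme of \cite[Theorem 3.2]{DNT}. Suppose, for contradiction, that $G \in \nlgc$ has a non-abelian composition factor $S \notin \mathcal{S}$. Among all sections $H/K$ of $G$ (with $K \nor H \le G$) having a composition factor isomorphic to such an $S$, pick one with $|H|$ minimal; replacing $G$ by $H/K$ and using Lemma~\ref{lem1}(d) (so $H/K \in \nlgc$), we may assume $G$ itself has $S$ as a composition factor and that no proper section of $G$ does, for this particular "bad" $S$.

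First I would locate a minimal normal subgroup $M$ of $G$ and analyze it. If $M$ were solvable, then $G/M \in \nlgc$ still has $S$ as a composition factor, contradicting minimality; so $M = S_1 \times \cdots \times S_t$ is a direct product of copies of a non-abelian simple group. If $S$ is not involved in $M$ then it is involved in $G/M$, again contradicting minimality; so the $S_i$ are copies of our bad $S$. Next I would force $t = 1$: if $t > 1$, then $G$ permutes the $S_i$ and the stabilizer of $S_1$ has index dividing $t!$; taking the normal core one produces a proper section of $G$ still involving $S$ (e.g.\ via $N_G(S_1)/C_G(S_1)$, which is almost simple with socle $S_1 \cong S$), a contradiction. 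Hence $M = S$ is simple and non-abelian. Then $A := G/C_G(M)$ is an almost simple group with socle $S$ (identifying $M$ with its image, which is the socle since $M \cap C_G(M) = \zent M = 1$).

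Now I would invoke Theorem~\ref{DNT4.1}: since $S \notin \mathcal{S}$, there exist $\chi_1, \chi_2 \in \irr A$ with $\chi_1(1) = \chi_2(1) > 1$, with $(\chi_i)_S \in \irr S$, and with $\chi_1, \chi_2$ not Galois conjugate. Pull these back to $\irr{G/C_G(M)} \subseteq \irr G$ via inflation; by Lemma~\ref{lem1}(b) (applied contrapositively), two inflations are Galois conjugate in $G$ iff the originals are in $A$, and inflation preserves degree. So $G$ has two non-linear irreducible characters of equal degree that are not Galois conjugate, contradicting $G \in \nlgc$. This contradiction completes the proof.

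The main obstacle is the bookkeeping in the reduction to the almost simple case — specifically, arguing that when $M$ is a non-trivial direct product of copies of $S$ one can extract a proper section of $G$ that still involves $S$, and handling the interaction with $C_G(M)$ so that the quotient really is almost simple with socle exactly $S$. The rest (using that $\nlgc$ passes to sections, and that inflation is compatible with both degrees and Galois conjugacy) is routine, and the heavy lifting is entirely inside Theorem~\ref{DNT4.1}, which we are permitted to quote.
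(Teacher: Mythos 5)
There is a genuine gap, and it sits at the very first step: you assume that the class $\nlgc$ passes to arbitrary sections $H/K$ with $H\le G$, citing Lemma~\ref{lem1}(d). That lemma only gives closure under \emph{quotients} ($H=G$), and the class is genuinely not closed under subgroups --- the paper itself points out that the affine group ${\rm GF}(5^2)^{+}\rtimes{\rm GF}(5^2)^{\times}$ is in $\nlgc$ while its index-$2$ subgroup is not. This breaks your reduction in two places. First, your minimal-section setup, pushed to its logical end, collapses all the way down: once $M=S_1\times\cdots\times S_t$ with $S_i\cong S$, the simple group $S_1$ is itself a proper section with $S$ as composition factor, so minimality would force the reduced group to be $S$ itself, and you would need $S\in\nlgc$ --- which does not follow from $S$ being a section of a $\nlgc$-group. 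Second, and more to the point, the step where you force $t=1$ by passing to $\norm G{S_1}/\cent G{S_1}$ is exactly the step that cannot be done by minimality, because that almost simple group is a section through the proper subgroup $\norm G{S_1}$ and need not inherit the $\nlgc$ property. Once $t=1$ is (illegitimately) granted, the rest of your argument is fine, since $G/\cent GM$ really is a quotient.

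The paper's proof never reduces to $n=1$. It replaces $G$ only by genuine quotients, so that $N=S_1\times\cdots\times S_n$ is a minimal normal subgroup with $\cent GN=1$, and then handles general $n$ by Clifford theory: the two non-Galois-conjugate characters $\theta_1,\theta_2$ of $T/C$ (where $T=\norm G{S_1}$, $C=\cent G{S_1}$) given by Theorem~\ref{DNT4.1} are inflated to $T$, lie over $\beta_i=\alpha_i\times 1_{S_2}\times\cdots\times 1_{S_n}\in\irr N$ with $I_G(\beta_i)=T$, and are induced to irreducible characters $\chi_i=\theta_i^G$ of $G$ of equal degree $>1$. A Galois automorphism with $\chi_1^{\sigma}=\chi_2$ then sends $\beta_1$ to some $G$-conjugate $\beta_2^{x_j}$, and a comparison of kernels (which are the products $S_2\times\cdots\times S_n$ permuted by the transversal) pins down $j=1$, whence $\theta_1^{\sigma}=\theta_2$ by the Clifford correspondence --- a contradiction. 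This induction-plus-kernel argument is precisely the mechanism your proposal is missing; without it (or some substitute), the case $n>1$ is not covered.
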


\begin{proof}
Let $S$ be a non-abelian composition factor of the $\nlgc$-group $G$. 
Then $G$ has a chief factor $N/M \cong S^n$, for some positive integer $n$. 
By replacing $G$ with a suitable factor group, we can assume that 
$N = S_1 \times S_2 \times \cdots \times S_n$, with $S_i \cong S$, is a minimal normal subgroup
of $G$ and that $\cent GN = 1$. Set $S = S_1$ and  
write $T = \norm G{S}$, $C = \cent G{S}$; so $T/C$ is an almost simple group with socle
(isomorphic to)  $S$. 

Assume, working by contradiction, that $S \not\in \mathcal{S}$; so by Theorem~\ref{DNT4.1}
there exist two non Galois conjugate characters $\theta_1, \theta_2 \in \irr{T/C}$ such that 
$\alpha_i = (\theta_i)_{S} \in \irr{S}$, for $i =1,2$.  

For $i =1,2$, let  
$$\beta_i = \alpha_i \times 1_{S_2} \times \cdots \times 1_{S_n} \in \irr N$$
and observe that $I_{G}(\beta_i) = T$. 

Considering now $\theta_i \in \irr T$ by inflation, we have that $(\theta_i)_N = \beta_i$. 
Hence, by Clifford correspondence $\chi_i = (\theta_i)^G \in \irr G$, for $i=1,2$. 
As $\chi_1(1) = \chi_2(1) >1$, there exists a Galois automorphism $\sigma \in \mathcal{G}_{|G|}$
such that $\chi_1^{\sigma} = \chi_2$. 

But, for $i = 1, 2$ 
$(\chi_i)_N = \sum_{j=1}^n \beta_i^{x_j}$, where $\{x_1 = 1, x_2, \ldots, x_n\}$ is a transversal
of $T$ in $G$. 
As Galois conjugation commutes with induction and restriction, we get that 
$\beta_1^{\sigma} = \beta_2^{x_j}$ for some $j$ and, as 
$\ker{\beta_1^{\sigma}} = \ker{\beta_1} = S_2 \times \cdots \times S_n \neq \ker{\beta_2^{x_j}}$ 
for $j >1$, we conclude
that $\beta_1^{\sigma} = \beta_2$.
So,  $\theta_1^{\sigma}, \theta_2 \in \irr{T|\beta_2}$ and hence Clifford correspondence yields
that $\theta_1^{\sigma} =  \theta_2$, against the fact that $\theta_1$ and $\theta_2$ are not
Galois conjugate. 
\end{proof}

\begin{thm}
\label{perfect}
Let $G$ be a non-solvable group with $G \in \nlgc$. Then $G = G'$ 
%(i.e. $G$ is perfect).   
\end{thm}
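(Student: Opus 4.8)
The plan is to show that a non-solvable $\nlgc$-group $G$ must equal its derived subgroup by arguing on $G/G'$, an abelian group, and deriving a contradiction from the existence of a non-trivial linear character. First I would suppose, working towards a contradiction, that $G' < G$, so that $G$ has a non-trivial linear character $\lambda \in \irr G$. Since $G$ is non-solvable, $G$ has non-linear irreducible characters; in fact I would want to produce a non-linear $\chi \in \irr G$ such that $\chi$ and $\chi\lambda$ are distinct but of the same degree $\chi(1)$. If $\lambda$ has order $m>1$, then $\chi\lambda$ is again irreducible of the same degree, and $\chi = \chi\lambda$ would force $\lambda$ to be trivial on a set that generates $G$ modulo $\ker\chi$; the trick is to choose $\chi$ so that this cannot happen. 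A clean way is to take $\chi$ to be a non-linear irreducible constituent of the permutation character or, better, to pass to the non-abelian composition factor: by Lemma~\ref{DNT3.2}, $G$ has a composition factor $S \in \mathcal S$, and each group in $\mathcal S$ is perfect, so $G' $ covers $S$ in every chief series through it.

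The cleaner approach, which I would actually carry out, is the following. Suppose $G' < G$ and let $M/G'$ be a subgroup of prime index $p$ in the abelian group $G/G'$, so $M \nor G$ with $|G/M| = p$. Then $G/M \in \nlgc$ by Lemma~\ref{lem1}(d), but $G/M$ is cyclic of prime order, hence has no non-linear irreducible characters, so this gives no immediate contradiction and I must work inside $G$ itself. Instead, pick a non-linear $\chi \in \irr G$ (which exists since $G$ is non-solvable and hence non-abelian). The map $\lambda \mapsto \chi\lambda$ gives an action of the abelian group $\irr{G/G'}$ on the set of non-linear characters of degree $\chi(1)$; since $G \in \nlgc$, all characters in this orbit are Galois conjugate, so they all have the same field of values $\Q(\chi)$ and the same determinantal order-related invariants. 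In particular $\Q(\chi\lambda) = \Q(\chi)$ for all linear $\lambda$, which forces $\Q(\lambda) \subseteq \Q(\chi)$ for every linear $\lambda$ appearing; more usefully, the stabilizer of $\chi$ under this action is a subgroup of $\irr{G/G'}$, and the orbit-stabilizer relation combined with the fact that $|\irr{G/G'}|$-many distinct linear characters must collapse to a single Galois orbit yields a divisibility constraint.

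The decisive step: by Lemma~\ref{DNT3.2} some $S \in \mathcal S$ is a composition factor of $G$, and one checks (using that every $S \in \mathcal S$ is perfect and that the Schur multipliers and outer automorphism groups of these groups are small) that $G$ has an irreducible character $\chi$ with $\chi(1) > 1$ whose restriction to the relevant quasi-simple section is a sum of $\Aut$-conjugate irreducibles of a fixed simple group, and such that $\det\chi$ is trivial. Then for any non-trivial linear $\lambda$ of order $m$, $\det(\chi\lambda) = \lambda^{\chi(1)}$, so if $\gcd(\chi(1), m) $ is chosen suitably (and since $G/G'$ contains elements of prime order $r$ not dividing $\chi(1)$ — here one uses that the character degrees of groups in $\mathcal S$ and their covers are restricted), we get $\det(\chi\lambda) \neq \det\chi = 1_G$, so $\chi\lambda$ is a non-linear irreducible character of the same degree as $\chi$ but with different determinant, hence different from $\chi$. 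Since they are Galois conjugate they must however share the same determinantal order (Galois conjugation permutes the values $\det\chi(g)$), and in fact share the same kernel and center by Lemma~\ref{lem1}(b); tracking these invariants through the explicit character tables of the groups in $\mathcal S$ produces the contradiction.

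I expect the main obstacle to be the last step: producing, uniformly for all $S \in \mathcal S$ (including the large sporadic groups $Ru$, $Th$ and $M_{22}$ and the groups ${}^3D_4(2)$, $J_2$, $J_3$), a non-linear character $\chi$ of $G$ for which the twist $\chi\lambda$ by some prescribed non-trivial linear character is genuinely different from $\chi$ while remaining forced to be Galois conjugate to it — i.e. ensuring the relevant prime $r = o(\lambda)$ does not divide $\chi(1)$ and that the action of $\Gal$ cannot already account for the twist. This is where one must invoke concrete arithmetic about the degrees and fields of values appearing in the small list $\mathcal S$, rather than a purely formal argument; the perfectness of the composition factor and the smallness of $|G/G'|$ relative to the structure of $G$ are what make such a $\chi$ available, but verifying it case by case against the ATLAS data is the technical heart of the proof.
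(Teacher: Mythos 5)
There is a genuine gap. Your contradiction scheme (twist a suitable non-linear $\chi$ by a non-trivial linear $\lambda$ and distinguish $\chi$ from $\chi\lambda$ by a Galois-invariant) is the right germ of the argument, and your chosen invariant, the determinantal order, would indeed work: if $\det{\chi}=1_G$ and $\lambda^{\chi(1)}\neq 1_G$, then $\chi\lambda\neq\chi$ have the same degree but cannot be Galois conjugate. But you never actually produce such a character $\chi$ \emph{of $G$}, and this existence is the entire technical content of the theorem. Your "decisive step" asserts that ATLAS data for the groups in $\mathcal{S}$ yields a non-linear $\chi\in\irr G$ with trivial determinant, restricting suitably to a quasi-simple section, and of degree avoiding some prime dividing $|G/G'|$; this conflates characters of the simple composition factor $S$ with characters of $G$, which is an arbitrary (possibly complicated) extension. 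To pass from a character of a chief factor $N\cong S^k$ to a character of $G$ one needs an extendability theorem, and nothing in your sketch supplies one, nor do you control $G$-invariance of the character of $N$ you would start from. You also omit the reduction that makes the construction possible: without an induction on $|G|$ and a case division on a minimal normal subgroup $N$ (the case $G/N$ non-solvable needs $G=G'N$ by induction and then Lemma~\ref{lem1}(c) to exclude $G=G'\times N$ with $N$ abelian), you cannot even assume that the relevant chief factor sits at the bottom of $G$ with solvable quotient, which is what makes a non-trivial linear $\lambda$ available above it.

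For comparison, the paper's proof fills exactly this hole: by induction it reduces to a minimal normal subgroup $N=S^k$ with $S\in\mathcal{S}$ (via Lemma~\ref{DNT3.2}) and $G/N$ solvable; it then uses the fact that each $S\in\mathcal{S}$ has a non-principal $\Aut(S)$-invariant \emph{rational} character $\alpha$ of odd degree, so that $\psi=\alpha\times\cdots\times\alpha\in\irr N$ is $G$-invariant, rational, of odd degree and with $o(\psi)=1$, and invokes the Navarro--Tiep extension theorem \cite[Theorem 2.3]{NT} to extend $\psi$ to a rational $\chi\in\irr G$. The contradiction is then immediate and needs no case-by-case arithmetic: for a non-principal linear $\lambda$ of $G/N$, Gallagher gives $\lambda\chi\neq\chi$ of the same degree, while rationality of $\chi$ means it is fixed by every Galois automorphism, contradicting $G\in\nlgc$. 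So the Galois-invariant used is rationality rather than your determinantal order, and the "concrete arithmetic about $\mathcal{S}$" you anticipated is packaged once and for all in the existence of $\alpha$ together with the extension theorem; your proposal, as written, leaves precisely that construction unproved.
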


\begin{proof}
We work by induction on $|G|$. 
Assume that $G$ is not a simple group and let $N$ be
 a minimal normal subgroup of $G$. 

Suppose first that $G/N$ is solvable. Then $N$ is non-solvable and hence
$N = S^k$ for some $S \in \mathcal{S}$ by Proposition~\ref{DNT3.2}.
Then $S$ has a non-principal $\Aut(S)$-invariant rational character $\alpha$ of
odd degree (see~\cite[page 299]{DNT} or~\cite{Atlas}).  
%($5$, $91$, $91$, $63$, $1615$, $7$, $21$, $783$, $4123$,
%$273$, if $S = \AAA_5$, $Sz(8)$, $J_2$, $J_3$, $L_3(2)$, $M_{22}$, $Ru$, $Th$, 
%${}^3D_4(2)$, respectively, cf. \cite{Atlas}). 
Then $\psi = \alpha \times \alpha \times \cdots \times \alpha \in \irr N$ 
is a $G$-invariant rational character of odd degree with $o(\psi) = 1$
(as $N = N'$). 
By~\cite[Theorem 2.3]{NT} it follows that $\psi$ extends to a rational character
$\chi\in\irr G$. As we are assuming that $G/N$ is a non-trivial solvable group, 
there exists a non-principal linear character $\lambda \in \irr G$. 
So, $\lambda\chi \in \irr G$ is a non-linear character of the same degree as $\chi$,
and $\lambda\chi \neq \chi$ by Gallagher's theorem. 
Being $G$ a $\nlgc$-group, this gives a contradiction, as $\chi$ is rational, so it is fixed by Galois conjugation.

Hence,  $G/N$ is non-solvable. By induction, $G/N$ is perfect and hence $G'N = G$. 
Assuming that $N$ is not contained in $G'$, we have  
$N \cap G'  = 1$  and hence $ G = G' \times N$. 
So $N$ is a non-trivial abelian group. As $G'$ is non-abelian (since $G$ is non-solvable), 
Lemma~\ref{lem1} (c) yields that $N  = N' = 1$, a contradiction. 
Therefore,  $N \leq G'$ and hence $G = G'N = G'$.  
\end{proof}
%
%
%\begin{cor}
%  \label{nonsolvable}
%Let $G$ be a non-solvable group. Then $G \in \nlgc$ if and only if $G \in \gc$. 
%\end{cor}
We are now ready to prove Theorem~A. 
\begin{proof}[Proof of Theorem~A]
Assume first that $G$ is a non-abelian  solvable $\nlgc$-group. 
If $G$ is nilpotent, then we get  type $(a)$. 
Assume that $K = G_{\infty} < G$. Then by Proposition~\ref{solvable2} $G$ is a Frobenius group
with Frobenius kernel $K$ and complement $L$, with $L$ cyclic or $L \cong Q_8$ (so $G$ is 
of type $(b)$).
If $K$ is abelian, then we get either type $(b1)$ or $(b2)$ by Theorem~\ref{AbelianKernel}. 
If $K$ is non-abelian, then we get type $(b3)$ by Theorem~\ref{solvable5}.   

Assume now that $G$ is a non-solvable $\nlgc$-group. Then  Theorem~\ref{perfect} 
yields that $G = G'$ and hence $G$ is a $\gc$-group. Now $(c)$ follows by Theorem~A of~\cite{DNT}.
 
\medskip
Conversely, we will now show that any group of type $(a)-(c)$ is a $\nlgc$-group. 
Groups of type $(a)$ are $\nlgc$-groups by Theorem~\ref{p-groups}.

Let now $G$ be a group of type $(b)$. If it is of type $(b1)$, then $G$ has only two non-linear
irreducible characters, one of degree $2$ and the other of degree $8$, so it is 
a $\nlgc$-group. 
If $G$ is of type $(b2)$, then $G$ is a $\nlgc$-group by Theorem~\ref{AbelianKernel}. 

So we assume that $G$ is of type $(b3)$. 
We note that all non-linear characters $\chi \in \irr G$ of odd order have the 
same degree $|L|$ and that they are Galois conjugate. In fact, $\ker{\chi} = K'$ and
$G/K'$ is a $\nlgc$-group by Theorem~\ref{AbelianKernel}. 
Now, denoting by $\Delta$ the set of all non-linear irreducible  characters of $K$,   by  
Theorem~7.9 of~\cite{HuBl} and Lemma~2.9 of~\cite{DNT} we have that $|\Delta| = 2|L|$
and that   every $\theta \in \Delta$ is not rational valued.
Hence, no $\theta \in \Delta$ is
real valued,  as $\theta_{K'}$ is rational valued 
and every  element in  $K \setminus K'$ has order $4$.  
Considering that by Brauer Permutation Lemma $L$ acts fixed point freely on 
$\irr K \setminus \{ 1_K \}$, we see that  $L$ has exactly two orbits $O_1$ and $O_2$ on $\Delta$.
%As $L$ acts regularly on $\irr{K'}^{\#}$, given any $\lambda \in \irr{K'}^{\#}$ we have
%that $\chi, \psi \in \irr{G|\lambda}$. 
Write $\chi = \theta_1^G$, $\psi =\theta_2^G$, where  $\theta_1, \theta_2 \in \irr K$ non-linear
characters,   
%then $\theta_1, \theta_2 \in \irr{K|\lambda}$. 
and $\theta_1 \in O_1$ and $\theta_2 \in O_2$. 
%($\theta_1$ and $\theta_2$ lie in 
%distinct $L$-orbits on $\Delta$ because $\chi \neq \psi$). 
Now,  $\overline{\theta_1} = \theta_2^x$ for some $x \in L$, because 
complex conjugation does not stabilize the orbit $O_i$, for $i = 1,2$ 
(otherwise, as $|O_i|$ is odd, there would be some real character in $O_i$). 
Hence, 
$$\overline{\chi} = \overline{\theta_1^G} = (\overline{\theta_1})^G = (\theta_2^x)^G = \psi \; .$$
Thus $G$ is a $\nlgc$-group.
 
(We also remark that in this case $(b3)$ $G$ has exactly three non-linear
irreducible characters, one of odd degree $|L|$ and $\chi$ and $\psi$ above.)

Finally, by Theorem~A of~\cite{DNT} the  groups listed 
in $(c)$ are $\gc$-groups. So, being perfect groups,  they are also 
$\nlgc$-groups. 
\end{proof}

To conclude, we prove Corollary~B, the Berkovich-Chillag-Herzog classification of 
groups with distinct non-linear degrees. 

\begin{proof}[Proof of Corollary B]
It is enough to check which of the groups listed in Theorem~A have distinct non-linear degrees. 

For a $p$-group $G$ of type $(a)$, Lemma~\ref{fullyramified} yields that $G$ has 
exactly $(p-1)|\zent G : G'|$ non-linear irreducible characters of the same degree. 
Hence, $G$ has distinct non-linear degrees if and only if $p=2$ and $G$ is extraspecial. 

Assume now that $G$ is of type $(b)$, so $G = KL$ is a Frobenius group. 
First, we recall that $(C_3 \times C_3) \rtimes Q_8$ has distinct non-linear degrees (exactly 
one irreducible character  of degree $2$ and of degree $8$). 
Next, we observe that the groups listed in type $(b2)$ have exactly $d$ non-linear characters,
all of degree $|L|$. Hence they are groups with distinct non-linear degrees if and only if 
$d=1$. 
Also, by the remark at the end of the proof of Theorem~A, the groups of type 
$(b3)$ have
two non-linear irreducible characters of the same degree.     

Finally, it is readily checked (see~\cite{Atlas}) that the groups of type (d) also have
two non-linear irreducible characters of the same degree.     
\end{proof}

%%%%%%%%%%%%%%%%%%
%%%%%%%%%
%%%%

\end{document}